\theoremstyle{plain}
\newtheorem{Thm}{Theorem}[section]
\theoremstyle{definition}
\theoremstyle{remark}
\newtheorem{Rem}{Remark}
\begin{document}

\articletype{RESEARCH ARTICLE}

\title{Optimal control of a heroin epidemic mathematical model}

\author{\name{P. T. Sowndarrajan\textsuperscript{a}, 
L. Shangerganesh\textsuperscript{a}, 
A. Debbouche\textsuperscript{b,c} 
and	D. F. M. Torres\textsuperscript{c}\thanks{CONTACT: D. F. M. Torres. Email: delfim@ua.pt}}
\affil{\textsuperscript{a}Department of Applied Sciences, 
National Institute of Technology, Goa 403401, India; 
\textsuperscript{b}Department of Mathematics, Guelma University, Guelma 24000, Algeria;\\
\textsuperscript{c}Center for Research and Development in Mathematics and Applications (CIDMA),
Department of Mathematics, University of Aveiro, Aveiro 3810-193, Portugal}}

\maketitle


\begin{abstract}
A heroin epidemic mathematical model with prevention information 
and treatment, as control interventions, is analyzed, assuming that
an individual's  behavioral response depends on the spreading 
of information about the effects of heroin. Such information 
creates awareness, which helps individuals to participate in preventive 
education and self-protective schemes with additional efforts. 
We prove that the basic reproduction number is the threshold 
of local stability of a drug-free and endemic equilibrium. 
Then, we formulate an optimal control problem to minimize the 
total number of drug users and the cost associated with 
prevention education measures and treatment. We prove existence 
of an optimal control and derive its characterization through
Pontryagin's maximum principle. The resulting optimality system 
is solved numerically. We observe that among all possible strategies, 
the most effective and cost-less is to implement both control policies. 
\end{abstract}

\begin{keywords}
Heroin epidemic mathematical model; stability analysis; behavioral change; optimal control
\end{keywords}	

\begin{amscode}
34D20; 49J15; 92D30
\end{amscode}


\section{Introduction}

An opioid drug, made from morphine, is known as heroin. 
It is a natural substance in the seedpod of the Asian opium poppy 
plant, used as a recreational drug for its euphoric effects. 
Heroin is also known as diamorphine, used as a painkiller 
or in an opioid replacement treatment. It is a white 
or brown powder, or a black sticky substance known 
as black tar heroin \cite{NIDA}. The users inject these types of contents into a vein. 
Heroin can also be smoked, snorted or inhaled. When injected into a vein, 
the drug has two to three times the effect of a similar dose of morphine. 
Nowadays, the use of non-medical prescription drugs is becoming a significant 
threat around the world. It is estimated that about 23\% of individuals who 
take these substances become dependent on it. Further, drug users are suffering 
severe mental illness, including suicidal, especially among youth individuals. 
According to the World Heath Organization (WHO), roughly 450,000 people 
die as a result of drug use \cite{WDR2018}. 
The spread of heroin usage and its addiction follows 
many of the familiar aspects of an epidemics. 
The National Survey on Drug Use and Health (NSDUH) \cite{Lipari2015} 
estimates that the percentage of heroin users aged twelve or above was particularly
high between 2002 and 2008. Among the young adults, it is similar high from 2009 to 2012. 
Currently, many countries are affected by the heroin drug trade and its growing 
number of users. Due to high in quality and low in cost, heroin is likely 
to approach consumer markets over the world with increased consumption 
and related harmful effects. Drug users who share needles have a higher 
risk of the proliferation of other diseases, like human immunodeficiency virus (HIV), 
Hepatitis B and C \cite{Li,Garten}.   

A successful anti-drug vaccine produces an immune response to block 
the target drug from entering the brain and thus avoiding psychoactive 
or addictive effects. The experimental new vaccine for heroin addiction, 
developed at the National Institute on Drug Abuse (NIDA) \cite{NIDA}, 
incites the generation of antibodies, which block the effects of heroin.  
It prevents the drug from crossing the blood-brain barrier 
and the euphoric effects of heroin, so far tested in mice and rats. 
Side effects in quitting heroin usage are very severe and, often, 
constrain heroin addicts to reverse. Available medicines can be given 
during the detoxification stage to prevent and reduce physical symptoms.

Treatment of drug users is an expensive procedure. It is a long-term process 
involving various interventions and consistent monitoring to recover successfully.  
Many countries still fail to provide treatment and other health services, 
mainly due to lack of fund. It is also a significant burden on the health 
system of many nations. Several types of researches suggest that drug usage 
and the associated harm are highest among young people aged 18 to 25 years. 
Therefore, the rise in awareness among users is needed to make countries healthy. 
Prevention education, treatment interventions, and alternative development programs,  
as well as a criminal justice response,  prevent an increase in drug habituation 
and its disorders. Furthermore, it is also important to provide treatment 
and services to minimize the adverse health effects due to drug use. 

Mathematical models are a handy tool to describe and predict how classes 
of drug users behave. Moreover, treatment strategies can 
naturally be added to such models as control variables. 
The resulting control systems can then play a vital role 
in the understanding of the drug addiction problem.  
This is our motivation in the present work: to investigate the
real-life problem of heroin mathematically, as an epidemic problem,
with the aim to reduce the transmission mechanism. As already mentioned,
for a better control of the transmission mechanism, it is crucial 
to consider both prevention education and treatment. Moreover, to treat heroin 
addiction and disorders related to its usage, it is important 
the integration of both behavioral and pharmacological medications. 
Such integrated approach will eventually result 
in a society free of drug-addiction. With the reduction 
of the criminal behavior, one expects an increase in the employment 
rate and a lower risk of other related health diseases. Therefore, 
prevention and treatment of heroin-addiction is beneficial 
and efficient for individuals and society. 

During last few years, several mathematical models have been developed 
to describe the heroin epidemic \cite{Mulone,White}. The literature
includes clinical and theoretical studies
\cite{White,Samanta2011,Mushayabasa2015,Mushayabasa2011}, 
as well as educational campaigns \cite{UNODC2014}. In \cite{Mulone,White}, 
susceptible, untreated, and treated individuals are considered 
with a standard incidence rate. In \cite{Isaac2017}, Wangari and Stone 
model the heroin problem based on similar assumptions as used in infectious diseases
and consider a saturation function for treating the heroin drug users. 
In \cite{Wang2011},  Wang et al. consider a mass action incidence rate 
and prove the existence of a drug-free equilibrium and a unique endemic equilibrium, 
which is stable under some conditions. 
Huang and Liu study the stability for a heroin epidemic model 
with distributed delay \cite{Huang2013},
improving related results previously obtained in \cite{Liu2011}.  
A non-autonomous heroin epidemic model is also analyzed 
in \cite{Samanta2011}, where it is suggested that the spread of the heroin among
users can be controlled by full screening measures. 
More recently, Wang et al. \cite{MR3877418} analyzed mathematically an age-structured 
heroin epidemic model, which can be used to describe the spread 
of heroin habituation and addiction in a heterogeneous environment.
Even though the above-proposed models study many important features, 
most of them fail to consider the heroin epidemic model as a control system. 
Here we propose a heroin epidemic disease model with control 
variables and study it using optimal control theory.

In \cite{Mushayabasa2015}, the authors formulate a mathematical model 
for illicit drug use and investigate optimal control strategies for the model. 
Precisely, they incorporate two control functions: one to reduce the intensity 
of social influence and the other to increase the rate of detection 
and rehabilitation of illicit drug users. In \cite{Saha}, a synthetic drug 
transmission model is proposed and an optimal control problem is formulated. 
They show that a proper optimal policy can minimize the cost burden as well 
as the number of addicted individuals. In \cite{Kassa2015}, the authors 
incorporate control interventions as education campaigns and treatment 
to minimize the impact of HIV. In \cite{Saha2019}, an epidemic model 
with the effect of information about the vaccine and treatment, 
as control interventions, is considered. Optimal control of a SIR model 
with education or information, that causes a change in the behavior 
response, is proposed in \cite{Joshi2015}. The dynamics
of illicit drug use and its optimal control analysis is investigated 
in \cite{Mushayabasa2015}.  In \cite{Nicholas2017}, a mathematical model 
with prescription drug addiction and treatment is proposed to control the opioid epidemic. 
An optimal control problem and a cost-effectiveness analysis for the transmission 
of the Zika virus is analyzed in \cite{Abdulfatai2018}, with four types 
of preventive measures as control variables. In \cite{Ebenezer2016,MyID:364}, Ebola models 
with a preventive control in the form of education campaigns are investigated. 
Very recently, an age-structured heroin epidemic model is formulated 
with partial differential equations, under the assumption 
that susceptibility and recovery are age-dependent, 
keeping in view some control measures for heroin addiction and using
optimal control for simulations, which show the effect on the entire 
population \cite{MR4178232,MR4261505}.

Motivated by the above mentioned works, we extend here available heroin epidemic models 
by introducing two new compartments. In particular, we extend the model proposed in 
\cite{White} by modeling behavioral change, through the spreading of information 
and preventive education, and treatments. The novelty of the model consists to express 
the risk factors of drugs through information, as a behavioral response to susceptible 
individuals. As a consequence, in our model the susceptible have an active role 
in preventive education and provide a self-protective effect. Furthermore, 
failure in participating in preventive programs move individuals back 
to the susceptible population. 

The paper is organized as follows. In Section~\ref{s2}, we formulate a new
mathematical model for the heroin epidemic with a behavioral response. 
Furthermore, the drug free equilibrium is computed and the basic reproduction number
$R_0$ obtained. Section~\ref{s3} deals with the stability analysis of the equilibrium points
(we prove that the heroin free equilibrium is locally asymptotically stable when
the basic reproduction number is less than one, cf. Theorem~\ref{thm01},
and, under a suitable condition, is globally asymptotically stable, cf. Theorem~\ref{thm001};
and we obtain conditions under which the endemic equilibrium of the system 
is locally asymptotically stable, cf. Theorem~\ref{thm02}) and we perform
a sensitivity analysis of the epidemiological model. Then, in Section~\ref{s4}, 
we formulate an optimal control problem and do its analysis: we prove existence 
of an an optimal control pair that minimizes the proposed 
cost functional, in finite time, cf. Theorem~\ref{thm03}; 
and we characterize it using Pontryagin's Maximum Principle, 
cf. Theorem~\ref{thm:PMP}. Numerical simulations are given in Section~\ref{s5}. 
We end with some concluding remarks in Section~\ref{sec:conc}. We claim that 
the model here proposed and analyzed can also be useful
to combat other drug epidemics such as cocaine.


\section{Mathematical model}
\label{s2}

In this section, we propose and analyze a mathematical model 
for the heroin epidemics. A schematic diagram for the proposed model 
is shown in Figure~\ref{schfig}.
\begin{figure}[h!]
\centering
\includegraphics[width=7cm,height=5cm]{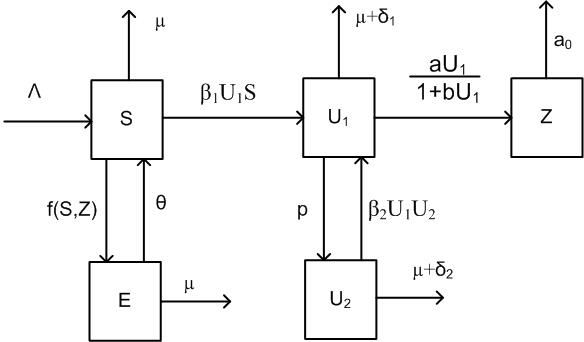}
\caption{Schematic diagram of the proposed heroin epidemic model.} 
\label{schfig}
\end{figure}
The parameter $\Lambda$ represents the constant recruitment rate 
of susceptible individuals, $S$. The natural death rate of all individuals 
is assumed to be $\mu$. The parameter $\beta_1$ represents the probability 
of becoming a drug user, $U_1$, and $\beta_2$ denotes the rate of drug users 
in treatment, $U_2$, relapsing to untreated. The rate at which drug users enter 
into treatment is $p$. Later, we will incorporate a ``case holding'' control mechanism 
in the model by replacing the parameter $p$ with $u_2(t)$, which will act 
as a control variable. The individuals in preventive education, $E$,
are in a state of self-protection, and stop to participate in preventive
education, moving back to the susceptible class, at a rate $\theta$. 
Parameters $\delta_1$ and $\delta_2$ denote induced death rates caused 
by heroin, respectively for drug users without and with treatment.
Information regarding heroin mainly spreads through various media such as TV, newspapers etc., 
as well as active social and educational campaigns from the government. This information density 
is proportional to the population density of drug users under no treatment and will change 
as population $U_1$ changes. Let $Z(t)$ denote the density of information spreading 
in the population at time $t$, such that the information density $Z(t)$ vanishes when $U_1(t)=0$.
This information increases the awareness in the behavioral change 
of insusceptible individuals to protect themselves from consuming heroin.
Even though the people are informed, everyone does not respond to it equally.
So, only a fraction of susceptible population with information 
is responding to the harmful effects of consuming heroin and changing their behavior, 
moving to class $E$. The rate of behavioral response via information interaction 
will be a function of both the densities of susceptible individuals and information, 
that is, $f(S,Z)$. Also, the growth of information is a function of $U_1$, that is,
$g(U_1)$, as the growth of information depends only on the density 
of untreated heroin consumers. The rate of behavioral response of susceptible individuals, 
caused by the information about the harmful and risk factors of heroin, 
is here assumed as $f(S, Z)=u_1\rho S Z$  \cite{Saha2019}, 
where $u_1\rho$ is the corresponding response rate and the parameter $\rho$ 
is the rate of information interaction by which individuals change their behavior. 
Later, we will consider $u_1$, the response intensity through information, 
as a control variable $u_1(t)$. The growth rate of information $g(U_1)$ is assumed as a
saturated functional of the form $g(U_1)=\frac{aU_1}{1+bU_1}$, 
where $a$ is the growth rate of information and $b$ the saturation constant. 
We denote by $a_0$ the natural degradation rate of information, 
which happens with time due to natural fading of memory about information 
as well as complacent behavior. Furthermore, we consider the following 
assumptions in our model: drug users who are not in treatment
may contact with susceptible and users in treatment in an undesirable way, 
transmitting to them the habit of consuming heroin; drug users who are in treatment 
does not transmit the habit of consuming heroin to susceptible; finally, those who
are in treatment may return back to no-treatment due to contact with $U_1$ individuals.
Thus, the proposed system of nonlinear ordinary differential equations 
is as follows:
\begin{equation}
\label{model}
\begin{cases}
\displaystyle \frac{dU_1(t)}{dt}=\beta_1S(t)U_1(t)-pU_1(t)+\beta_2U_1(t)U_2(t)
-(\mu+\delta_1)U_1(t),\\[2mm]
\displaystyle\frac{dU_2(t)}{dt}=pU_1(t)-\beta_2U_1(t)U_2(t)
-(\mu+\delta_2)U_2(t), \\[2mm]
\displaystyle\frac{dS(t)}{dt}
=\Lambda-\beta_1S(t)U_1(t)-\mu S(t)+\theta E(t)-f\left(S(t),Z(t)\right),\\[2mm]
\displaystyle\frac{dE(t)}{dt}=f\left(S(t),Z(t)\right)-(\mu+\theta) E(t), \\[2mm]
\displaystyle\frac{dZ(t)}{dt}=g\left(U_1(t)\right)-a_0 Z(t),
\end{cases}
\end{equation}
with non-negative initial conditions  $U_1(0)$, $U_2(0)$, $S(0)$, $E(0)$ and $Z(0)$.
The total population, denoted by $N(t)$, is partitioned into the four sub-classes 
of susceptible $S(t)$, drug users not in treatment $U_1(t)$,  
drug users in treatment $U_2(t)$, and educated $E(t)$:
$N(t) :=  U_1(t) + U_2(t) + S(t) +E(t)$.

Let us consider the biological feasible region by
$$
D=\left\{ (S,U_1,U_2,E,Z)\in \mathbb{R}_+^5
: 0\leq N(t)\leq \frac{\Lambda}{\mu},\ 
0<Z(t)\leq \frac{a\Lambda}{a_0 \mu} \right\}.
$$
Next, we establish the positive invariance of the feasible region $D$. 
It is trivial to show that $(S,U_1,U_2,E,Z)>0$ is positive for all time. 
Thus, 
$$
\frac{dN}{dt}=\Lambda-\mu N-\delta_1U_1-\delta_2U_2\leq \Lambda-\mu N.
$$ 
Then, a standard comparison theorem (see, e.g., \cite{Lakshmikantham}) 
can be used to show that 
$$
N(t)\leq \left(N(0)-\frac{\Lambda}{\mu}\right)
e^{-\mu t}+\frac{\Lambda}{\mu}\leq \frac{\Lambda}{\mu}
$$ 
for all $t\geq 0$.  Moreover, we have 
$$
\frac{dZ}{dt}\leq aU_1-a_0Z\leq \frac{a\Lambda}{\mu}-a_0Z.
$$ 
It follows that 
$$
Z(t)\leq \left(Z(0)-\frac{a\Lambda}{a_0\mu}\right)
e^{-a_0t}+\frac{a\Lambda}{a_0\mu}\leq \frac{a\Lambda}{a_0\mu}
$$ 
for all $t\geq 0$ and system (\ref{model}) 
is epidemiologically and mathematically well-posed.
 
 
\subsection{Drug free equilibrium} 

In order to study the behavior of the heroin model 
at its equilibrium, we set the right-hand side of all the equations 
of system (\ref{model}) to zero. It is easy to understand that  
$\tilde{U}_1=\tilde{U}_2=\tilde{E}=\tilde{Z}=0$ at the drug free state 
and, therefore, the drug free equilibrium (DFE) point of our
heroin drug model is given as 
\begin{equation}
\label{eq:E0}
E_0=\left(\tilde{S},\tilde{U_1},\tilde{U_2},
\tilde{E},\tilde{Z}\right)=\left(\frac{\Lambda}{\mu},0,0,0,0\right).
\end{equation}


\subsection{Basic reproduction number}

The basic reproduction number, denoted by $R_0$, is a threshold parameter 
used in epidemiology to measure the transmission potential of an infection. 
It is defined to be the expected number of secondary cases produced 
from a typical infected individual when introduced into a susceptible 
population during its entire period of infection. Here, $R_0$ represents 
the total number of people that each single drug user will initiate 
to drug use during their drug-using career. To obtain the basic reproduction 
number for model (\ref{model}), we use the next generation matrix method  
of \cite{van2002}. 

Let $x=(U_1,U_2,S,E,Z)^T$. 
Then, system (\ref{model}) can be written as 
$$ 
\frac{dx}{dt}=\mathcal{F}(x)-\mathcal{V}(x), 
$$
where $\mathcal{F}(x)=(\beta_1SU_1+\beta_2U_1U_2,0,0,0,0)^T$ 
is the rate of new addictions in the population and 
\begin{multline*}
\mathcal{V}(x)=\Biggl((\mu+\delta_1+p)U_1 , 
\beta_2U_1U_2+(\mu+\delta_2)U_2-pU_1 , 
\beta_1SU_1+\mu S+u_1dSZ-\theta E-\Lambda,\\ 
\mu E+\theta E-u_1dSZ ,a_0Z-\frac{aU_1}{1+bU_1}\Biggr)^T
\end{multline*}
is the rate of transfer of individuals. 
Thus, the corresponding linearized matrices are given as 
$D\mathcal{F}(x)(E_0)$ and $D\mathcal{V}(x)(E_0)$, respectively.
To obtain the basic reproduction number $R_0$, we need to consider 
only the infected components of $F:=D\mathcal{F}(x)(E_0)$ 
and $V:=D\mathcal{V}(x)(E_0)$ \cite{van2002}. Therefore, we have
\begin{eqnarray*}
F=
\left( 
\begin{array}{ccc} 
\displaystyle \frac{\beta_1 \Lambda}{\mu} 
& 0 & 0 \\ 0 & 0 & 0 \\ 0 & 0 & 0 \end{array} \right)
\end{eqnarray*}
and 
\begin{eqnarray*}
V=
\left( 
\begin{array}{ccc} 
Q_1 &  0 & 0 \\ 
\ -p & Q_2   & 0 \\ 
\ -a & 0 \  & a_0 
\end{array} \right),
\end{eqnarray*}
where 
\begin{equation}
\label{Q1Q2}
Q_1=\mu+\delta_1+p
\quad \text{ and }\quad 
Q_2=\mu+\delta_2.  
\end{equation}
Matrix $FV^{-1}$ is said to be the next generation matrix of system (\ref{model}). 
The basic reproduction $R_0$ is given as the spectral radius of $FV^{-1}$, 
and we obtain that
\begin{equation}
\label{brn}
R_0= \frac{\beta_1 \Lambda}{\mu(\mu+\delta_1+p)}.
\end{equation}


\section{Stability analysis}
\label{s3}

In this section, we discuss the stability of the equilibrium points
of model \eqref{model}.

\begin{Thm}
\label{thm01}
If $R_0 <1$, then the heroin free equilibrium $E_0$ is locally asymptotically stable. 
\end{Thm}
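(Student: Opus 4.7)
The plan is to invoke the standard linearization criterion: compute the Jacobian $J(E_0)$ of the right-hand side of system~(\ref{model}) at the drug free equilibrium and show that, when $R_0<1$, every eigenvalue has negative real part, so that local asymptotic stability follows by the Hartman--Grobman theorem.

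First I would order the state as $(U_1,U_2,S,E,Z)$ and differentiate, using $f(S,Z)=u_1\rho S Z$ and $g(U_1)=aU_1/(1+bU_1)$. Evaluating at $E_0=(\Lambda/\mu,0,0,0,0)$ kills every partial derivative in which a factor $U_1$, $U_2$, $E$ or $Z$ remains, so the resulting matrix is very sparse. In particular the first row reduces to $(\beta_1\Lambda/\mu-Q_1,0,0,0,0)$, with $Q_1$ as in~(\ref{Q1Q2}), while the $4\times 4$ minor obtained by deleting the first row and column turns out to be upper-triangular with diagonal entries $-Q_2,-\mu,-(\mu+\theta),-a_0$, each strictly negative because all model parameters are positive.

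Expanding $\det(J(E_0)-\lambda I)$ along the first row then yields a completely factored characteristic polynomial, and the eigenvalues read off as
\[
\lambda_1=\frac{\beta_1\Lambda}{\mu}-Q_1,\qquad \lambda_2=-Q_2,\qquad \lambda_3=-\mu,\qquad \lambda_4=-(\mu+\theta),\qquad \lambda_5=-a_0.
\]
Only $\lambda_1$ is not automatically negative; but by the definition~(\ref{brn}) of $R_0$ one has $\lambda_1=Q_1(R_0-1)$, which is negative precisely when $R_0<1$. Thus, under the hypothesis, the spectrum of $J(E_0)$ lies in the open left half-plane and $E_0$ is locally asymptotically stable, proving the theorem.

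The main obstacle is purely organizational rather than analytic: with a poor ordering of the variables the characteristic polynomial appears as a messy quintic and the dependence on $R_0$ is hidden. Ordering the state so that the $U_1$-row is isolated exposes the block/triangular structure and makes the key eigenvalue $Q_1(R_0-1)$ decouple from the others; after that step, the proof is bookkeeping.
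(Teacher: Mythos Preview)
Your argument is correct and follows the same linearization strategy as the paper. The only difference is presentational: by placing $U_1$ first you expose a fully triangular structure and read off all five eigenvalues directly, whereas the paper (with a different ordering) isolates only the three eigenvalues $-\mu,\,-(\mu+\theta),\,-a_0$ and treats the remaining $(U_1,U_2)$-block via a Routh--Hurwitz check on the residual quadratic $\lambda^{2}+\zeta_1\lambda+\zeta_2=0$; since that $2\times 2$ block is itself triangular at $E_0$ (the $\beta_2U_1$ coupling vanishes), your shortcut is legitimate and recovers exactly the critical eigenvalue $\lambda_1=Q_1(R_0-1)$.
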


\begin{proof}
The Jacobian matrix of the system at $E_0$ is 
\begin{eqnarray*}
J(E_0)=
\left( \begin{array}{ccccc} -\mu 
&  -\frac{\beta_1\Lambda}{\mu} & 0 & \theta & -\frac{u_1\rho\Lambda}{\mu}\\ 
0 & -\frac{\beta_1\Lambda}{\mu}-p-(\mu+\delta_1)   & 0 & 0 & 0 \\  
0 & p & -(\mu+\delta_2) & 0 & 0 \\ 
0 & 0 & 0 & -(\mu+\theta) & \frac{u_1\rho\Lambda}{\mu} \\ 
0 & a & 0 & 0 & -a_0
\end{array} \right).
\end{eqnarray*}
The eigenvalues of the characteristic equation of 
$J(E_0)$ are 
$$
\lambda_1=-\mu, 
\quad \lambda_2=-(\mu+\theta), 
\quad \lambda_3=-a_0,
$$ 
together with the solutions of the equation 
\begin{equation}
\label{eq:ap:RH:c}
\lambda^2+\zeta_1\lambda+\zeta_2=0,
\end{equation}
where $\zeta_1=\displaystyle2\mu+p+\delta_1+\delta_2+\frac{\beta_1\Lambda}{\mu}$ and $\zeta_2=\displaystyle-\frac{\beta_1\Lambda}{\mu}\left[(\mu+\delta_2)\left(1-\frac{1}{R_0}\right)\right]$.
By the Routh--Hurtwiz condition, the characteristic equation \eqref{eq:ap:RH:c}
has two roots with negative real parts if, and only if, $\zeta_i >0$, $i=1,2$.
The result follows from the fact that $R_0<1$ implies $\zeta_i >0$, $i=1,2$. 
\end{proof}

\begin{Thm}
\label{thm001}
If $\displaystyle \frac{\beta_1 \Lambda}{\mu(\mu+\delta_1)}<1$, then
the heroin free equilibrium $E_0$ is globally asymptotically stable.
\end{Thm}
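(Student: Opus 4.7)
The plan is to establish global attractivity of $E_0$ in the feasible region $D$ by means of a Lyapunov function combined with LaSalle's invariance principle; local asymptotic stability is already delivered by Theorem~\ref{thm01}, since the hypothesis $\beta_1\Lambda/[\mu(\mu+\delta_1)]<1$ is stronger than $R_0<1$.

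First, I would introduce the linear Lyapunov candidate $L(t)=U_1(t)+U_2(t)$, which is continuous, nonnegative on $D$, and vanishes precisely on the drug-free slice $\{U_1=U_2=0\}$ containing $E_0$. Differentiating $L$ along trajectories of \eqref{model}, the relapse coupling terms $+\beta_2 U_1 U_2$ in $\dot U_1$ and $-\beta_2 U_1 U_2$ in $\dot U_2$ cancel, leaving
\begin{equation*}
\dot L \;=\; \bigl[\beta_1 S-(\mu+\delta_1)\bigr]U_1 \;-\;(\mu+\delta_2)U_2.
\end{equation*}

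Next, I would invoke the positive invariance of $D$ established in Section~\ref{s2}, which guarantees $S(t)\le N(t)\le \Lambda/\mu$. Combining this with the hypothesis gives $\beta_1 S-(\mu+\delta_1)\le \beta_1\Lambda/\mu-(\mu+\delta_1)<0$, so $\dot L\le 0$ on $D$, with equality only when $U_1=U_2=0$. Hence $L$ is a bona fide Lyapunov function.

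Finally, I would apply LaSalle's invariance principle on the compact positively invariant set $D$. The set where $\dot L=0$ coincides with $\{U_1=U_2=0\}\cap D$. Restricting \eqref{model} to this slice, the $U_1,U_2$ equations are trivially satisfied, while the remaining dynamics read
\begin{equation*}
\dot S=\Lambda-\mu S+\theta E-u_1\rho S Z,\qquad \dot E=u_1\rho S Z-(\mu+\theta)E,\qquad \dot Z=-a_0 Z,
\end{equation*}
since $g(0)=0$. A cascade argument then drives the trajectory to $E_0$: the $Z$-equation forces $Z(t)\to 0$ exponentially; feeding this into the $E$-equation with $S$ bounded yields $E(t)\to 0$ (variation-of-constants); and the $S$-equation reduces asymptotically to $\dot S=\Lambda-\mu S$, so $S(t)\to \Lambda/\mu$. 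Thus the largest invariant subset of $\{\dot L=0\}$ is the singleton $\{E_0\}$, and LaSalle's theorem yields global asymptotic stability.

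The principal obstacle is isolating a Lyapunov candidate that handles the bilinear relapse term $\beta_2 U_1 U_2$; the simple sum $U_1+U_2$ succeeds only because this coupling antisymmetrically cancels and because the bound $S\le\Lambda/\mu$ on $D$ lets the hypothesis be used directly. The second delicate point is the LaSalle step: one must carefully run the cascade $Z\to 0\Rightarrow E\to 0\Rightarrow S\to\Lambda/\mu$ on the reduced system rather than take it for granted.
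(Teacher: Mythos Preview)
Your proposal is correct and follows essentially the same approach as the paper: the same linear Lyapunov function $L=U_1+U_2$, the same cancellation of the $\beta_2 U_1 U_2$ terms, the same use of the bound $S\le\Lambda/\mu$ on $D$, and LaSalle's invariance principle to conclude. If anything, you are more careful than the paper: you explicitly invoke Theorem~\ref{thm01} for local stability (LaSalle alone yields only attractivity), and you supply the cascade argument $Z\to0\Rightarrow E\to0\Rightarrow S\to\Lambda/\mu$ to pin down the largest invariant subset of $\{\dot L=0\}$, a step the paper asserts without detail.
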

 
\begin{proof}
We consider the Lyapunov function $L(U_1,U_2)=U_1+U_2$. 
The time derivative of $L$ computed along the solutions 
of system \eqref{model} is given by
\begin{equation*}
\frac{dL}{dt}=\beta_1 S U_1-(\mu+\delta_1)U_1-(\mu+\delta_2)U_2 
\leq (\mu+\delta_1)\left[ \frac{\beta_1 \Lambda}{\mu(\mu+\delta_1)}
-1\right]U_1-(\mu+\delta_2)U_2.
\end{equation*}	
When $ \frac{\beta_1 \Lambda}{\mu(\mu+\delta_1)}<1$, 
we get $\frac{dL}{dt}< 0$. Furthermore, $\frac{dL}{dt}=0$ if and only 
if $U_1=U_2=0$. Note that the largest compact invariant set is 
$\left\{ (\tilde{S},\tilde{U_1},\tilde{U_2},\tilde{E},\tilde{Z})|\frac{dL}{dt}=0\right\}$. 
Therefore, the drug free equilibrium $E_0$ is globally asymptotically stable 
by LaSalle's invariance principle.
\end{proof}


\subsection{Endemic equilibria}

The endemic equilibrium point $E_1 = (S^*,U_1^*,U_2^*,E^*,Z^*)$ 
of system \eqref{model} is given by
\begin{eqnarray*}
S^*&=&\frac{\Lambda}{\mu}\frac{1}{R_0}\left[
\frac{Q_1(\beta_2U_1^*+\mu+\delta_2)
-\beta_2pU_1^*}{Q_1(\beta_2U_1^*+\mu+\delta_2)}\right],\\[1mm]
U_2^*&=& \frac{p U_1^*}{\beta_2U_1^*+\mu+\delta_2}, \\[1mm]
E^*&=&\frac{u_1\rho S^*aU_1^*}{a_0(1+bU_1^*)(\mu+\theta)}, \\[1mm]
Z^*&=&\frac{aU_1^*}{a_0(1+bU_1^*)},
\end{eqnarray*}
with $U_1^*$ the positive solution of equation
$A_1U_1^3+A_2U_1^2+A_3U_1+A_4=0$, where 
\begin{eqnarray*}
A_1&=&\displaystyle Q_1\beta_2a_0b(\mu+\theta)(\mu+\delta_1),\\
A_2&=& \displaystyle -a_0\left(\mu+\theta\right)bQ_1\beta_2\Lambda\left(1-\frac{1}{R_0}\right)
+Q_1\beta_2a_0(\mu+\theta)(\mu+\delta_1)+Q_1^2Q_2a_0b(\mu+\theta)\\
&&-\frac{\Lambda}{R_0}\beta_2pa_0b(\mu+\theta)
+\frac{\Lambda}{R_0}\beta_2u_1\rho a(\mu+\delta_1),\\
A_3&=& \displaystyle -a_0(\mu+\theta)Q_1\beta_2\Lambda\left(1-\frac{1}{R_0}\right)
-a_0b(\mu+\theta)Q_1Q_2\Lambda\left(1-\frac{1}{R_0}\right)+Q_1^2Q_2a_0(\mu+\theta)\\
&&-\frac{\Lambda}{R_0}\beta_2pa_0(\mu+\theta)+\frac{\Lambda}{R_0}Q_1Q_2u_1\rho a, \\
A_4&=&\displaystyle -a_0(\mu+\theta)Q_1Q_2\Lambda\left(1-\frac{1}{R_0}\right)
\end{eqnarray*}
and $Q_1$ and $Q_2$ are given as in \eqref{Q1Q2}.
For all possible values of the parameters, one has always $A_1 >0$. 
If $R_0>1$, then $A_4<0$. Now, using Descartes' rule of signs, we obtain:
\begin{itemize}
\item[(i)] if $A_2>0$ and $A_3>0$, then there exists exactly one positive root,
\item[(ii)] if $A_2>0$ and $A_3<0$, then there exists exactly one positive root,
\item[(iii)] if $A_2<0$ and $A_3>0$, then there exists three or one positive roots,
\item[(iv)] if $A_2<0$ and $A_3<0$, then there exists exactly one positive root.
\end{itemize}
Therefore, $U_1^*$ may have a non-trivial positive value if any one 
of the above four conditions (i)--(iv) is satisfied.

\begin{Thm}
\label{thm02}
The endemic equilibrium $E_1 = (S^*,U_1^*,U_2^*,E^*,Z^*)$  
of the system is locally asymptotically 
stable if $R_0>1$ and the following conditions are satisfied:
 \begin{equation}
\label{eq:RH:holds}
\begin{gathered}
\Theta_i>0, \quad i=2,\ldots,5,\\
\Theta_1\Theta_2\Theta_3>\Theta_3^2+\Theta_1^2\Theta_4,\\ 
(\Theta_1\Theta_4-\Theta_5)(\Theta_1\Theta_2\Theta_3
-\Theta_3^2-\Theta_1^2\Theta_4)>\Theta_5(\Theta_1\Theta_2-\Theta_3)^2+\Theta_1\Theta_5^2,
\end{gathered}
\end{equation}
where
\begin{equation}
\label{eq:theta:i}
\begin{split}
\Theta_1&=-a_{11}-a_{22}-a_{33}-a_{44}-a_{55}, \\
\Theta_2&=a_{11}(a_{44}+a_{33}+a_{22}+a_{55})
-a_{21}a_{12}-a_{14}a_{41}-a_{23}a_{32}+a_{55}a_{22}\\
&\quad +a_{33}(a_{22}+a_{55})+a_{44}(a_{33}+a_{22}+a_{55}), \\
\Theta_3&= a_{11}(a_{23}a_{32}-a_{33}a_{44}-a_{44}a_{22}
-a_{44}a_{55}-a_{33}a_{22}-a_{33}a_{55}-a_{55}a_{22})\\
&\quad -a_{21}a_{15}a_{52}-a_{33}a_{44}a_{22}-a_{55}(a_{33}a_{44}+a_{44}a_{22}+a_{33}a_{22})\\
&\quad+a_{21}(a_{12}a_{33}+a_{12}a_{44}+a_{12}a_{55})+a_{14}(a_{33}a_{41}+a_{41}a_{22}+a_{41}a_{55})\\
&\quad +a_{23}a_{32}(a_{44}+a_{55}),\\
\Theta_4&= a_{11}a_{44}(a_{33}a_{22}-a_{23}a_{32})-a_{11}a_{55}(a_{23}a_{32}
-a_{33}a_{44})+a_{11}a_{55}a_{22}(a_{44}+a_{33})\\
&\quad -a_{21}a_{12}(a_{33}a_{44}+a_{33}a_{55}+a_{44}a_{55})
-a_{14}a_{33}a_{41}(a_{22}+a_{55})-a_{21}a_{14}a_{45}a_{52}\\
&\quad +a_{21}a_{15}a_{52}(a_{33}+a_{44})+a_{23}a_{32}(a_{14}a_{41}-a_{44}a_{55})\\
&\quad +a_{55}a_{22}(a_{33}a_{44}-a_{14}a_{41}), \\
\Theta_5&= a_{11}a_{44}a_{55}(a_{23}a_{32}-a_{33}a_{22})
-a_{14}a_{41}a_{55}(a_{23}a_{32}-a_{33}a_{22})\\
&\quad -a_{21}(a_{15}a_{33}a_{44}a_{52}
-a_{12}a_{33}a_{44}a_{55}-a_{14}a_{45}a_{33}a_{52}),
\end{split}
\end{equation}
with
\begin{equation}
\label{eq:q:aij}
\begin{split}
a_{11}&=-\beta_1U_1^*-\mu-u_1\rho Z^*, 
\quad  a_{12}=-\beta_1S^*, 
\quad a_{14}=\theta, 
\quad  a_{15}=-u_1\rho S^*, \\
a_{21}&=\beta_1U_1^*,
\quad a_{22}=\beta_1S^*+\beta_2U_2^*-Q_1, 
\quad a_{23}=\beta_2U_1^*,\\ 
a_{32}&=p-\beta_2U_2^*, 
\quad a_{33}=-\beta_2U_1^*-Q_2, \\
a_{41}&=u_1\rho Z, 
\quad a_{44}=-(\mu+\theta), 
\quad a_{45}=u_1\rho S^*,\\ 
a_{52}&=\frac{a}{(1+bU_1^*)^2}, 
\quad a_{55}=-a_0,
\end{split}
\end{equation}
and $Q_1$ and $Q_2$ given by \eqref{Q1Q2}.
\end{Thm}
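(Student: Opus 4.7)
The plan is to apply the standard linearization approach: compute the Jacobian of the right-hand side of \eqref{model} at the endemic equilibrium $E_1=(S^*,U_1^*,U_2^*,E^*,Z^*)$, derive its characteristic polynomial, and verify the Routh--Hurwitz criterion for a degree-five polynomial. Since the theorem already lists the entries $a_{ij}$ of the Jacobian (cf.\ \eqref{eq:q:aij}) and the coefficients $\Theta_i$ (cf.\ \eqref{eq:theta:i}), the proof essentially reduces to an algebraic verification combined with the Routh--Hurwitz conditions \eqref{eq:RH:holds}.

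First I would order the state vector as $(S,U_1,U_2,E,Z)$ and, differentiating the right-hand sides of \eqref{model}, check that the nonzero entries of $J(E_1)$ coincide with those listed in \eqref{eq:q:aij}, with all the unlisted $a_{ij}$ being equal to zero (in particular $a_{13}=a_{24}=a_{25}=a_{31}=a_{34}=a_{35}=a_{42}=a_{43}=a_{51}=a_{53}=a_{54}=0$). The block of zeros is what keeps the characteristic polynomial manageable, since many $5\times 5$ minors vanish identically. Once the Jacobian is in hand, I would expand
\[
\det\!\bigl(\lambda I - J(E_1)\bigr)
=\lambda^{5}+\Theta_1\lambda^{4}+\Theta_2\lambda^{3}
+\Theta_3\lambda^{2}+\Theta_4\lambda+\Theta_5,
\]
using cofactor expansion along the sparsest row (the third or the fifth), and collect the coefficients. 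This is where I would match, term by term, the expressions given in \eqref{eq:theta:i}: $\Theta_1=-\mathrm{tr}\,J(E_1)$, $\Theta_2$ is the sum of the $2\times 2$ principal minors, etc.

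Next, I would invoke the Routh--Hurwitz criterion for a fifth-degree real polynomial, which states that all roots have negative real part if and only if
\[
\Theta_i>0\ (i=1,\ldots,5),\quad
\Theta_1\Theta_2\Theta_3>\Theta_3^{2}+\Theta_1^{2}\Theta_4,
\]
together with the fifth-order Hurwitz determinant condition
\[
(\Theta_1\Theta_4-\Theta_5)(\Theta_1\Theta_2\Theta_3-\Theta_3^{2}-\Theta_1^{2}\Theta_4)
>\Theta_5(\Theta_1\Theta_2-\Theta_3)^{2}+\Theta_1\Theta_5^{2}.
\]
These are precisely the hypotheses \eqref{eq:RH:holds}, so under those assumptions every eigenvalue of $J(E_1)$ has negative real part and $E_1$ is locally asymptotically stable. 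The role of $R_0>1$ is to guarantee, via the analysis done immediately before the theorem (Descartes' rule of signs on the cubic in $U_1^{*}$), that $E_1$ actually exists in the biologically feasible region, so that the linearization is being performed at a genuine interior equilibrium.

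The main obstacle is the bookkeeping of the cofactor expansion: the Jacobian has six off-diagonal nonzero entries scattered across the five rows, so the degree-five characteristic polynomial produces many cross-terms, and rewriting them into the compact form \eqref{eq:theta:i} requires careful grouping of the contributions coming from $a_{12},a_{14},a_{15},a_{21},a_{23},a_{32},a_{41},a_{45},a_{52}$. I would organize this by writing $J(E_1)$ as the sum of its diagonal part and a sparse off-diagonal part, and then expand the determinant by tracking which subsets of rows and columns yield nonvanishing minors. Once this is done, the positivity of $\Theta_1$ is immediate (all diagonal entries of $J(E_1)$ are negative, since $a_{22}=\beta_1S^*+\beta_2U_2^*-Q_1=-\beta_2U_1^*\cdot U_2^*/U_1^*-\cdots<0$ at the equilibrium from the first equation of \eqref{model}), and the remaining positivity and Hurwitz-type inequalities are precisely the hypotheses of the theorem; hence the conclusion follows.
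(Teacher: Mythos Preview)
Your proposal is correct and follows essentially the same approach as the paper's proof: compute the Jacobian at $E_1$, write out the degree-five characteristic polynomial with coefficients $\Theta_i$, and invoke the Routh--Hurwitz criterion, noting that $\Theta_1>0$ automatically. One minor slip: at the equilibrium the first equation of \eqref{model} gives $\beta_1 S^*+\beta_2 U_2^*=Q_1$, so in fact $a_{22}=0$ rather than strictly negative, but this does not affect your conclusion that $\Theta_1>0$, since the remaining diagonal entries $a_{11},a_{33},a_{44},a_{55}$ are all strictly negative.
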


\begin{proof}
The Jacobian matrix of the system at $E_1$ is 
\begin{eqnarray*}
J(E_1)=
\left( \begin{array}{ccccc} a_{11} &  a_{12} & 0 & a_{14} & a_{15}\\ 
a_{21} & a_{22}   & a_{23} & 0 & 0 \\  
0 & a_{32} & a_{33} & 0 & 0 \\ 
a_{41} & 0 & 0 & a_{44} & a_{45} \\ 
0 & a_{52} & 0 & 0 & a_{55}
\end{array} \right),
\end{eqnarray*}
where the $a_{ij}$'s are given as in \eqref{eq:q:aij}.
The characteristic equation of $J(E_1)$ is given by 
$\lambda^5+\Theta_1 \lambda^4+\Theta_2 \lambda^3
+\Theta_2 \lambda^2+\Theta_3 \lambda+\Theta_4 =0$
with the $\Theta_i$'s defined by \eqref{eq:theta:i}.
One has $\Theta_1>0$ for all feasible $S^*$ and $U_2^*$. Therefore, 
the endemic equilibrium of the system is locally asymptotically 
stable if, and only if, the Routh--Hurwitz criterion is satisfied,
that is, conditions \eqref{eq:RH:holds} hold. 
\end{proof}


\subsection{Sensitivity analysis}

A sensitivity analysis of the epidemiological model is performed 
to determine the relative importance of the model parameters 
to the infection transmission. Such analysis is important to discover 
the parameters that have a high impact on $R_0$ and should be targeted 
by intervention strategies. The basic reproduction number \eqref{brn} 
of system \eqref{model} depends on the recruitment rate of susceptible, $\Lambda$, 
on the probability $\beta_1$ of becoming a drug user, on the natural death rate $\mu$, 
on the rate $p$ at which drug users enter into treatment, 
and on the induced death rate $\delta_1$ caused by heroin. 
Computing the partial derivatives of $R_0$ with respect 
to $\beta_1$ and $p$ gives 
\begin{eqnarray*}
\frac{\partial R_0}{\partial \beta_1}
&=& \frac{\Lambda}{\mu(\mu+\delta_1+p)}>0, \\
\frac{\partial R_0}{\partial p}
&=& -\frac{\beta_1 \Lambda}{\mu(\delta_1+\mu+p)^2} <0.
\end{eqnarray*}
Next, we examine the sensitivity of $R_0$ with respect 
to the parameters $\beta_1$ and $p$, by the method 
of Arriola and Hyman \cite{Arriola2005}: the normalized 
forward sensitivity index for each of those parameters. 
The normalized forward sensitivity index of a variable 
for a parameter is the ratio of the relative change 
in the variable to the relative change in the parameter. 
When the variable is a differentiable function of the parameter, 
then the sensitivity index may be alternatively defined using partial derivatives.
Note that to the most sensitive parameter $v$ it corresponds a normalized 
forward sensitivity index of one or minus one: $\Theta_{v}=\pm 1$. 
If $\Theta_{v}=+ 1$, then an increase (decrease) of $v$ by $x$ percent  
increases (decreases) $R_0$ by $x$ percent. If $\Theta_{v}=- 1$, then
an increase (decrease) of $v$ by $x$ percent decreases (increases) $R_0$ by $x$ percent 
\cite{Rosa2019,Silva2013}. In order to reduce the drug burden, we pay more 
attention to the highest sensitivity index parameters.
Therefore, we compute for parameters $\beta_1$ and $p$ as follows:
\begin{eqnarray*}
\Theta_{\beta_1}&=&
\left|\frac{\beta_1}{R_0}\frac{\partial R_0}{\partial \beta_1}\right|=1.
\end{eqnarray*}
It is noted that the basic reproduction number $R_0$ is most sensitive 
to changes in $\beta_1$, that is, the probability of becoming a drug user. 
If $\beta_1$ increases, then $R_0$ will increase. Similarly, if $\beta_1$ 
decreases, then $R_0$ will decrease. Next,
\begin{eqnarray*}
\Theta_{p}&=&
\left|\frac{p}{R_0}\frac{\partial R_0}{\partial p}\right|<1.
\end{eqnarray*} 
Here, $R_0$ is less sensitive to changes in $p$, the rate at which 
drug users enter into treatment.  Further, $\Theta_{p}$ suggests 
that an increment in $p$ will decrease $R_0$ and a decrease in $p$ will increase $R_0$. 
As $R_0$ is more sensitive to changes in $\beta_1$ than $p$, 
we choose to focus more on $\beta_1$. Furthermore,  
$\frac{\partial R_0}{\partial p}<0$ implies that improving 
the successful treatment rate is a successful remedy from drug addiction 
and its associated disorders. Finally, this sensitivity analysis 
tell us that preventing (or) controlling individual's from drug use 
is more effective than any other strategies.


\section{Optimal control model}
\label{s4}

In this section, we begin by formulating an optimal control problem 
with vaccination and treatment as control interventions. Then, 
we prove existence of an optimal control and we characterize
it through Pontryagin's Maximum Principle.


\subsection{The total cost functional}

Our main goal is to decrease the number of drug users 
and the cost of implementing the two control interventions. 
Therefore, we consider the following total cost functional 
$J$ to  minimize, as the weighted sum of three components:
\begin{equation}
\label{cost}
J[u_1(\cdot),u_2(\cdot)]
=\int_{0}^{t_f}[B_1 U_1+B_2 u_1^4(t)+B_3u_2^2(t)] dt \longrightarrow \min
\end{equation}
subject to the model control system 
\begin{equation}
\label{ss}
\left\{
\begin{array}{lllll}
\displaystyle\frac{dS}{dt}=\Lambda-\beta_1SU_1-\mu S+\theta E-u_1(t)\rho SZ, \\[3mm]
\displaystyle \frac{dU_1}{dt}=\beta_1SU_1-u_2(t)U_1+\beta_2U_1U_2-(\mu+\delta_1)U_1,\\[3mm]
\displaystyle\frac{dU_2}{dt}=u_2(t)U_1-\beta_2U_1U_2-(\mu+\delta_2)U_2,\\[3mm]
\displaystyle\frac{dE}{dt}=u_1(t)\rho SZ-(\mu+\theta) E,\\[3mm]
\displaystyle\frac{dZ}{dt}=\frac{a U_1}{1+b U_1}-a_0 Z,
\end{array}
\right.
\end{equation}
with fixed initial conditions 
\begin{equation}
\label{eq:OC:IC}
S(0) = S_0 >0, \,  U_1(0) = U_{1,0}>0, \, 
U_2(0) = U_{2,0} >0, \,  E(0) = E_0>0,\, 
Z(0) = Z_0>0. 
\end{equation}
Here, $t_f$ is the fixed terminal time. The detailed report 
of the three components in the cost functional (\ref{cost}) is as follows:
\begin{itemize}
\item[(i)] The cost induced by heroin burden itself is  
$\displaystyle \int_{0}^{t_f} B_1 U_1(t) dt$, which is 
proportional to the number of drug users $U_1$. It also includes 
drug-affected driving, creating an impact on the environment, 
nation's economy, individual's health loss, career loss like education, 
employment, and productivity, social care, etc. The coefficient $B_1$ 
represents the positive weight constant of the heroin drug user.

\item[(ii)] Preventive education to susceptible individuals. 
Prevention from drug abuse helps the population to live longer, 
happier and healthier. It also helps in a better growth of nation's economy,
making it stronger. Therefore, providing information about the risk 
factors behind the drug use, its associated disorders and mainly its prevention, 
like effective participation in preventive education, which includes 
self-protective schemes, makes a behavioral change among susceptible population.  
Additional efforts are needed to increase prevention and turn it more effective 
in controlling the habit of drug use. Mainly, preventive and protective factors 
should include impulse control, parental monitoring, academic competence, 
anti-drug use policies, and secure neighborhood attachment. Here, all 
the susceptible individuals are made aware of preventive education 
through spreading of information. In our model, the control variable $u_1(t)$ 
is the intensity response function through information to maximize 
the individual behavioral response and keeping cost low. 
The cost  $\displaystyle \int_{0}^{t_f} B_2 u_1^4(t) dt$ is 
involved in the process of information spreading, through preventive 
education and its participation. It may be through campaigns, 
mass media, social networks, etc. It also represents the cost 
of spreading information, which includes creating awareness about 
the high-risk factors of heroin abuse and its causes. Moreover, 
this gives information about the heroin user's behavior and, 
mainly, its protective measures. The cost is comparatively higher 
because of the additional efforts needed to convince individuals 
of a behavioral change.  Hence, we consider the non-linearity 
of order four $u_1^4(t)$ \cite{Saha2019,Grass}. It represents 
the high expenses and efforts to spread the information. Here, 
the coefficient $B_2$ represents the positive weight constant 
associated with the spreading of information.

\item[(iii)] Medical treatment to drug users. 
The drug addictions and its disorders can be lowered 
by a certain level by undergoing medical procedures. 
It involves hospitalization, diagnosis, medication, 
and other subsequent therapies, like contingency management 
psychology, motivational incentives, etc.
Here, we consider the treatment rate $p$ as the control variable 
$u_2(t)$, which measures the treatment intensity. 
The cost $ \displaystyle \int_{0}^{t_f} B_3 u_2^2(t) dt$ 
is involved in providing treatment. It represents the cost for medical 
treatment. To treat drug abuse population includes psychological 
and pharmacological medications. We consider a non-linearity of order two, 
$u_2^2(t)$, in the cost for treatment \cite{Kassa2015,kumar2017}. 
The coefficient $B_3$ represents the positive weight constant associated with treatment. 
\end{itemize}
Thus, the Lagrangian function $L$ for our optimal control problem is given by
\begin{equation}
\label{L}
L(S,U_1,U_2,E,Z,u_1,u_2)=B_1U_1+B_2u_1^4+B_3u_2^2. 
\end{equation} 
The control variables $u_1(t)$ and $u_2(t)$ of our optimal control problem 
involve the following admissible control set: 
$$ 
U_{ad} = \{(u_1, u_2)|u_i(t) \mbox{ is Lebesgue measurable on $[0,t_f]$}: 
0 \leq u_i(t) \leq u_{imax}, i=1,2 \},
$$
where $u_{1max}$ and $u_{2max}$ are fixed positive constants.
 
 
\subsection{Existence of optimal control}

In this subsection, we prove that there exists an optimal control 
pair $u_1^*$ and $u_2^*$ that minimizes 
the cost functional $J$ in finite time.

\begin{Thm}
\label{thm03}
There exists an optimal control pair $u_1^*$ and $u_2^*$ in $U_{ad}$ 
such that $J(u_1^*,u_2^*)=\min\{J(u_1,u_2)\}$, solution to the optimal
control problem (\ref{cost})--(\ref{eq:OC:IC}).
\end{Thm}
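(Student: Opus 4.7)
The plan is to invoke a standard existence theorem for optimal controls, specifically the Fleming--Rishel-type result, by verifying its hypotheses one by one for our problem \eqref{cost}--\eqref{eq:OC:IC}. These hypotheses are: (i) the set of admissible controls together with the corresponding state trajectories is non-empty; (ii) the admissible control set $U_{ad}$ is convex and closed; (iii) the right-hand side of the state system \eqref{ss} is continuous and bounded above by a linear function in the state and the control variables; (iv) the integrand $L$ of the cost functional is convex on $U_{ad}$; and (v) there exist constants $c_1, c_2 >0$ and $\alpha, \beta > 1$ such that $L(S,U_1,U_2,E,Z,u_1,u_2) \geq c_1(|u_1|^{\alpha}+|u_2|^{\beta})-c_2$.

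First I would establish non-emptiness by citing the boundedness and positive invariance of the feasible region $D$ proved in Section~\ref{s2}: since the right-hand side of \eqref{ss} is locally Lipschitz in the state variables for each admissible $(u_1,u_2)$, Picard--Lindel\"of yields a unique solution on $[0,t_f]$, and the a priori bounds $N(t) \leq \Lambda/\mu$ and $Z(t) \leq a\Lambda/(a_0\mu)$ ensure the solution exists globally on $[0,t_f]$. The convexity and closedness of $U_{ad}$ follow trivially from its definition as a product of intervals $[0,u_{imax}]$ of Lebesgue-measurable functions. For the linear growth condition, I would exploit the fact that the states remain in the compact set $D$, so that the bilinear and saturated terms in \eqref{ss} are globally bounded by constants times the state variables; writing the system as $\dot{x}=Ax+B(x,u)$ where $B(x,u)$ is bounded in modulus by some constant (since $S,U_1,U_2,E,Z$ are all a priori bounded) will give the required estimate.

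Next, for convexity of $L$ with respect to $(u_1,u_2)$, I would observe that $L$ is affine in the state component (through $B_1 U_1$, which does not affect convexity in the controls) and that $u_1 \mapsto B_2 u_1^4$ and $u_2 \mapsto B_3 u_2^2$ are strictly convex on $\mathbb{R}$, so their sum is convex in $(u_1,u_2)$. Finally, the coercivity bound (v) is immediate with $\alpha=4$, $\beta=2$, $c_1 = \min\{B_2,B_3\}$, and $c_2$ chosen to absorb the nonnegative term $B_1 U_1$ trivially (one can take $c_2 = 0$ since $B_1 U_1 \geq 0$).

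The main step, and the only one requiring any genuine care, is the linear growth verification in (iii): because of the bilinear couplings $\beta_1 S U_1$, $\beta_2 U_1 U_2$, and the term $\rho u_1 S Z$, the right-hand side is not linear in the state, and the argument relies crucially on the uniform a priori bounds coming from the invariance of $D$ established in Section~\ref{s2}. Once these hypotheses are checked, the existence of an optimal pair $(u_1^*, u_2^*)\in U_{ad}$ minimizing $J$ follows directly from the classical existence theorem, concluding the proof.
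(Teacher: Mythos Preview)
Your proposal is correct and follows essentially the same route as the paper: both verify the standard hypotheses of a Fleming--Rishel/Filippov--Cesari existence result (non-emptiness via Picard--Lindel\"of and the a priori bounds from the invariant region $D$, closedness and convexity of $U_{ad}$, convexity and superlinear coercivity of $L$ with $c_1=\min(B_2,B_3)$). The only cosmetic difference is that the paper phrases the structural condition on the dynamics as ``the state system is linear in the control variables with state-dependent coefficients,'' whereas you phrase it as a linear growth bound on the right-hand side; both formulations are standard and either suffices here.
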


\begin{proof}
To prove existence of solution, we need to satisfy the following conditions:
\begin{itemize}
\item[(1)] The admissible set of controls $U_{ad}$ 
and the state solutions of (\ref{ss}) is nonempty.
\item[(2)] The control set $U_{ad}$ is closed, 
convex and the state system can be expressed as a linear function 
of the control variables with coefficients that 
depend on time and state variables.
\item[(3)] The integrand $L$ in the cost functional (\ref{cost}) 
is convex on the control set $U_{ad}$  
and $L(S,U_1,U_2,E,Z,u_1,u_2)\geq h(u_1,u_2)$, 
where $h(u_1,u_2)$ is continuous and 
$|(u_1,u_2)|^{-1}h(u_1,u_2) \rightarrow \infty$ 
whenever $|(u_1,u_2)|\rightarrow \infty$, with
$|\cdot|$ the $L^2(0,t_f)$ norm.
\end{itemize}
For each control variables $u_1$ and $u_2$ in the set $U_{ad}$, 
the solutions of the system (\ref{ss}) are bounded and the right-hand side 
satisfies the Lipschitz condition with respect to the state variables. Therefore, 
by applying the Picard--Lindel\"{o}f theorem \cite{Coddington}, 
condition (1) holds. By definition, the control set $U_{ad}$ is closed and convex. 
The model system (\ref{ss}) is linear in the control variables $u_1$ and $u_2$ with
the coefficients dependent on the state variables. Thus, condition (2) is satisfied. 
Finally, the integrand $L$ is convex due to the biquadratic nature of $u_1$ 
and the quadratic nature of $u_2$. We have $L(S,U_1,U_2,E,Z,u_1,u_2) 
\geq B_2u_1^4+B_3u_2^2$. Let $c=\min(B_2,B_3)>0$ and $h(u_1,u_2)=c(u_1^4+u_2^2)$. Then, 
condition (3) is also satisfied. Hence, from \cite{Gaff2011}, there exists a control 
pair $u_1^*$ and $u_2^*$ such that $J(u_1^*,u_2^*) = \min  J(u_1 , u_2 )$. 
\end{proof}


\subsection{Characterization of optimal control functions}

Now, we derive necessary optimality conditions 
using Pontryagin's Maximum Principle (PMP) \cite{Lenhart,Pontryagin}. In particular,
we characterize the optimal control pair $u_1^*$ and $u_2^*$ for problem 
(\ref{cost})--(\ref{eq:OC:IC}). 

\begin{Thm}
\label{thm:PMP}	
Let $u_1^*$ and $u_2^*$ be optimal controls 
of problem (\ref{cost})--(\ref{eq:OC:IC}) 
and $S^*$, $U_1^*$, $U_2^*$, $E^*$, $Z^*$ the corresponding 
optimal state trajectories satisfying (\ref{ss})--(\ref{eq:OC:IC}). 
Then, there exists an adjoint variable 
$\lambda=(\lambda_1,\lambda_2,\lambda_3,\lambda_4,\lambda_5) \in \mathbb{R}^5$ 
that satisfies the following equations:
\begin{equation}
\label{as}
\begin{cases}
\displaystyle \frac{d\lambda_1}{dt}
=\lambda_1\beta_1U_1+\lambda_1\mu+\lambda_1u_1\rho Z
-\lambda_2\beta_1U_1-\lambda_4u_1\rho Z, \\[0.3cm]
\displaystyle \frac{d\lambda_2}{dt}=\lambda_1\beta_1S-\lambda_2\beta_1S
+\lambda_2u_2-\lambda_2\beta_2U_2+\lambda_2\mu+\lambda_2\delta_1-\lambda_3u_2 \\[0.3cm]
\hspace{0.5in}+\lambda_3\beta_2U_2-\lambda_5\frac{a}{(1+bU_1)^2}-B_1, \\[0.3cm]
\displaystyle \frac{d\lambda_3}{dt}
=-\lambda_2\beta_2U_1+\lambda_3\beta_2U_1+\lambda_3\mu +\lambda_3\delta_2, \\[0.3cm]
\displaystyle \frac{d\lambda_4}{dt}=\lambda_4\mu+\lambda_4 \theta-\lambda_1 \theta, \\[0.3cm]
\displaystyle \frac{d\lambda_5}{dt}=\lambda_1u_1\rho S-\lambda_4u_1\rho S+\lambda_5a_0,
\end{cases}
\end{equation}
with transversality conditions 
\begin{equation}
\label{tc}
\lambda_i(t_f)=0, \quad i=1,\ldots,5.
\end{equation}
Moreover, the optimal controls $u_1^*$ and $u_2^*$ are given as 
\begin{equation}
\label{oc}
\begin{split}
u_1^*(t) &= \min\left\{ \max\left\{ 0, \left( \displaystyle \frac{\rho
S^*(t)Z^*(t)}{4B_2}(\lambda_1(t)-\lambda_4(t))\right)^\frac{1}{3}\right\}, 
u_{1max} \right\}, \\
u_2^*(t) &= \min\left\{ \max\left\{ 0,  
\displaystyle\frac{(\lambda_2(t)-\lambda_3(t))U_1(t)}{2B_3}\right\}, u_{2max} \right\}.
\end{split}
\end{equation}
\end{Thm}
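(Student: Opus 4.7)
The plan is to apply Pontryagin's Maximum Principle directly, since the existence of an optimal pair is already guaranteed by Theorem~\ref{thm03} and the right-hand sides of the state system together with the Lagrangian are continuously differentiable in the state and control variables. First, I would introduce the Hamiltonian
\begin{equation*}
\begin{split}
H &= B_1 U_1 + B_2 u_1^4 + B_3 u_2^2
+ \lambda_1\bigl[\Lambda - \beta_1 S U_1 - \mu S + \theta E - u_1 \rho S Z\bigr] \\
&\quad + \lambda_2\bigl[\beta_1 S U_1 - u_2 U_1 + \beta_2 U_1 U_2 - (\mu+\delta_1) U_1\bigr]
+ \lambda_3\bigl[u_2 U_1 - \beta_2 U_1 U_2 - (\mu+\delta_2) U_2\bigr] \\
&\quad + \lambda_4\bigl[u_1 \rho S Z - (\mu+\theta) E\bigr]
+ \lambda_5\Bigl[\tfrac{a U_1}{1+bU_1} - a_0 Z\Bigr].
\end{split}
\end{equation*}
By PMP, along the optimal trajectory the adjoint vector $\lambda$ is absolutely continuous and satisfies $\dot{\lambda}_i = -\partial H/\partial x_i$ for $x = (S, U_1, U_2, E, Z)$. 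Since the terminal state is free and there is no terminal payoff in $J$, the transversality conditions \eqref{tc} follow immediately.

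Next, I would compute the five partial derivatives of $H$ with respect to the state variables, paying particular attention to $\partial/\partial U_1$ of the saturated information term, for which
\[
\frac{\partial}{\partial U_1}\frac{a U_1}{1+b U_1} = \frac{a}{(1+bU_1)^2}.
\]
Collecting the resulting expressions and changing signs produces exactly the adjoint system \eqref{as}; this step is routine bookkeeping, but must be done carefully because $U_1$ appears in four of the five state equations and contributes to four of the five adjoint equations.

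To characterize $u_1^*$ and $u_2^*$, I would exploit the minimum condition of PMP. Because $H$ is convex in each control separately (biquadratic in $u_1$ and quadratic in $u_2$), the unconstrained minimizer is found from $\partial H/\partial u_i = 0$. Computing
\[
\frac{\partial H}{\partial u_1} = 4 B_2 u_1^3 - (\lambda_1 - \lambda_4)\rho S Z,
\qquad
\frac{\partial H}{\partial u_2} = 2 B_3 u_2 - (\lambda_2 - \lambda_3) U_1,
\]
and solving for the real cube root in the first case and the linear root in the second, gives the interior stationary values
\[
\tilde{u}_1 = \left(\frac{\rho S^* Z^*}{4 B_2}(\lambda_1-\lambda_4)\right)^{1/3},
\qquad
\tilde{u}_2 = \frac{(\lambda_2-\lambda_3) U_1^*}{2 B_3}.
\]
Projecting these onto the admissible interval $[0,u_{i\max}]$ via the standard $\min\{\max\{0,\cdot\},u_{i\max}\}$ truncation yields the formulas \eqref{oc}.

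The one subtle point, and the place I expect to spend most care, is the cube-root characterization of $u_1^*$: the map $u_1 \mapsto 4B_2 u_1^3$ is a strictly increasing bijection of $\mathbb{R}$, but I want $u_1^* \ge 0$, so I must confirm that taking the real cube root of $(\lambda_1-\lambda_4)\rho S^* Z^*/(4B_2)$ and then truncating by $\max\{0,\cdot\}$ really produces the minimizer of $H$ over $[0,u_{1\max}]$ in all sign cases. This follows because the cubic $4B_2 u_1^3 - c$ is monotone in $u_1$, so when $c \le 0$ the minimum on $[0,u_{1\max}]$ is attained at $u_1=0$, and when $c > 0$ it is attained at $\min\{(c/(4B_2))^{1/3}, u_{1\max}\}$; in both cases the expression in \eqref{oc} is correct. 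The analogous (but simpler) argument works for $u_2^*$, completing the proof.
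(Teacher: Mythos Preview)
Your proposal is correct and follows essentially the same approach as the paper: define the Hamiltonian, obtain the adjoint system and transversality conditions from $\dot\lambda_i=-\partial H/\partial x_i$ with free terminal states, and derive the control characterizations from the minimality condition via $\partial H/\partial u_i=0$ together with projection onto $[0,u_{i\max}]$. Your added discussion of the sign cases for the cube-root expression of $u_1^*$ is a welcome clarification that the paper leaves implicit.
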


\begin{proof}
We define the Hamiltonian function as follows:
\begin{equation*}
\begin{split}
H(S,U_1,U_2,E,Z,u_1,u_2, \lambda)
&= L(S,U_1,U_2,E,Z,u_1,u_2)\\
&\quad +\lambda_1 \left(\Lambda-\beta_1SU_1-\mu S+\theta E-u_1\rho SZ\right)\\
&\quad +\lambda_2 \left(\beta_1SU_1-u_2U_1+\beta_2U_1U_2-(\mu+\delta_1)U_1\right)\\
&\quad +\lambda_3 \left(u_2U_1-\beta_2U_1U_2-(\mu+\delta_2)U_2\right)\\
&\quad +\lambda_4 \left(u_1\rho SZ-(\mu+\theta)\right)\\
&\quad +\lambda_5 \left(\frac{a U_1}{1+b U_1}-a_0 Z\right),
\end{split}
\end{equation*} 
where $L$ is the Lagrangian function (\ref{L}). 
Let $u_1^*$ and $u_2^*$ be the optimal controls and 
$S^*,U_1^*,U_2^*,E^*,Z^*$ the corresponding optimal state variables. 
From PMP, there exist functions $\lambda_1,\ldots,\lambda_5$ 
that satisfy the adjoint equations
$$ 
\frac{d\lambda_1}{dt}=-\frac{\partial H}{\partial S}, \ \  
\frac{d\lambda_2}{dt}=-\frac{\partial H}{\partial U_1}, \ \ 
\frac{d\lambda_3}{dt}=-\frac{\partial H}{\partial U_2}, \ \ 
\frac{d\lambda_4}{dt}=-\frac{\partial H}{\partial E}, 
\ \ \frac{d\lambda_5}{dt}=-\frac{\partial H}{\partial Z},
$$
evaluated at the optimal controls and corresponding state variables,
subject to the transversality conditions $\lambda_i(t_f)=0$, $i=1,\ldots,5$.
Therefore, we obtain the adjoint system \eqref{as} 
and terminal conditions (\ref{tc}). Finally, having in mind that
$$ 
\frac{\partial H}{\partial u_1}=4B_2u_1^3-\lambda_1\rho ZS+\lambda_4 \rho SZ
$$ 
and 
$$
\frac{\partial H}{\partial u_2}=2B_3u_2-\lambda_2U_1+\lambda_3U_1, 
$$
we obtain from the minimality condition of PMP that (\ref{oc}) holds.
\end{proof}

Concluding, the optimality conditions consist of the state system (\ref{ss}) 
with given initial conditions \eqref{eq:OC:IC}, the adjoint system (\ref{as}) 
with transversality conditions (\ref{tc}), and the optimal control functions (\ref{oc}). 
In the next Section~\ref{s5}, we solve numerically the obtained optimality system:
\begin{equation}
\label{os}
\begin{cases}
\displaystyle\frac{dS(t)}{dt}
=\Lambda-\beta_1SU_1-\mu S+\theta E-u_1\rho SZ, \\[2mm]
\displaystyle \frac{dU_1(t)}{dt}
=\beta_1SU_1-u_2U_1+\beta_2U_1U_2-(\mu+\delta_1)U_1, \\[2mm]
\displaystyle\frac{dU_2(t)}{dt}
= u_2U_1-\beta_2U_1U_2-(\mu+\delta_2)U_2, \\[2mm]
\displaystyle\frac{dV(t)}{dt}
= u_1\rho SZ-(\mu+\theta) E, \\[2mm]
\displaystyle\frac{dZ(t)}{dt}
=\displaystyle\frac{a U_1}{1+b U_1}-a_0 Z, \\[2mm]
\displaystyle\frac{d\lambda_1}{dt}
=\lambda_1\beta_1U_1+\lambda_1\mu+\lambda_1u_1\rho Z
-\lambda_2\beta_1U_1-\lambda_4u_1\rho Z,  \\[2mm]
\displaystyle\frac{d\lambda_2}{dt}
=\lambda_1\beta_1S-\lambda_2\beta_1S+\lambda_2u_2
-\lambda_2\beta_2U_2+\lambda_2\mu+\lambda_2\delta_1-\lambda_3u_2 \\ 
\qquad +\lambda_3\beta_2U_2-\lambda_5\displaystyle\frac{a}{(1+bU_1)^2}-B_1, \\[2mm]
\displaystyle\frac{d\lambda_3}{dt}
=-\lambda_2\beta_2U_1 
+\lambda_3\beta_2U_1+\lambda_3\mu +\lambda_3\delta_2,  \\[2mm]
\displaystyle \frac{d\lambda_4}{dt}
=\lambda_4\mu+\lambda_4 \theta-\lambda_1 \theta,  \\[2mm]
\displaystyle \frac{d\lambda_5}{dt}
=\lambda_1u_1\rho S-\lambda_4u_1\rho S+\lambda_5a_0, \\[2mm]  
S(0) = S_0, \ U_1(0) = U_{10},\ 
U_2(0)= U_{20},\ E(0)= E_{0},\ Z(0) = Z_{0},\\[2mm]
\lambda_i(t_f)=0, \ i=1,\ldots,5,
\end{cases}
\end{equation}
with
\begin{equation}
\label{extremal:controls}
\begin{split}
u_1 &= \min\left\{ \max\left\{ 0, \left( \displaystyle \frac{\rho
S Z}{4B_2}(\lambda_1-\lambda_4)\right)^\frac{1}{3}\right\}, u_{1max} \right\},\\ 
u_2 &= \min\left\{ \max\left\{ 0,  \displaystyle\frac{(\lambda_2
-\lambda_3)U_1}{2B_3}\right\}, u_{2max} \right\}.
\end{split} 
\end{equation}

\begin{Rem}
In principle, there is a possibility of having ``singular controls'',  
which may occur along the arcs for which either $\lambda_1(t)- \lambda_4(t)$ 
or $\lambda_2(t)- \lambda_3(t)$ or both vanish. In our numerical simulations
such possibility was not found. 
\end{Rem}


\section{Numerical results and discussion}
\label{s5}

We begin by illustrating our Theorem~\ref{thm01} numerically.
For that, we consider the parameter values as given 
in Table~\ref{Table.}, for which the basic reproduction 
\eqref{brn} is less than one.
\begin{table}[h!]
\centering
\caption{Parameter values for which $R_0 <1$,
used to obtain Fig.~\ref{fig:Stable Equilibrium}.}
\label{Table.}
\begin{tabular}{llll}
\hline\noalign{\smallskip}
Parameter & Value \\
\noalign{\smallskip}\hline\noalign{\smallskip}
$\Lambda$   & 2.0    \\
$\beta_1$&  0.0002   \\
$\beta_2$ & 0.0001  \\
$\mu$   & 0.125  \\
$\rho$&   0.04  \\
$\delta_1$& 0.05  \\
$\delta_2$& 0.06  \\
$\theta$& 0.001  \\
$a$& 0.01    \\
$b$& 1.0   \\
$a_0$& 0.06  \\
\noalign{\smallskip}\hline
\end{tabular}
\end{table} 
In agreement with Theorem~\ref{thm01}, we see in
Fig.~\ref{fig:Stable Equilibrium} the stability of 
the population around the drug free equilibrium \eqref{eq:E0}.
\begin{figure}[h!]
\centering
\includegraphics[scale=0.4]{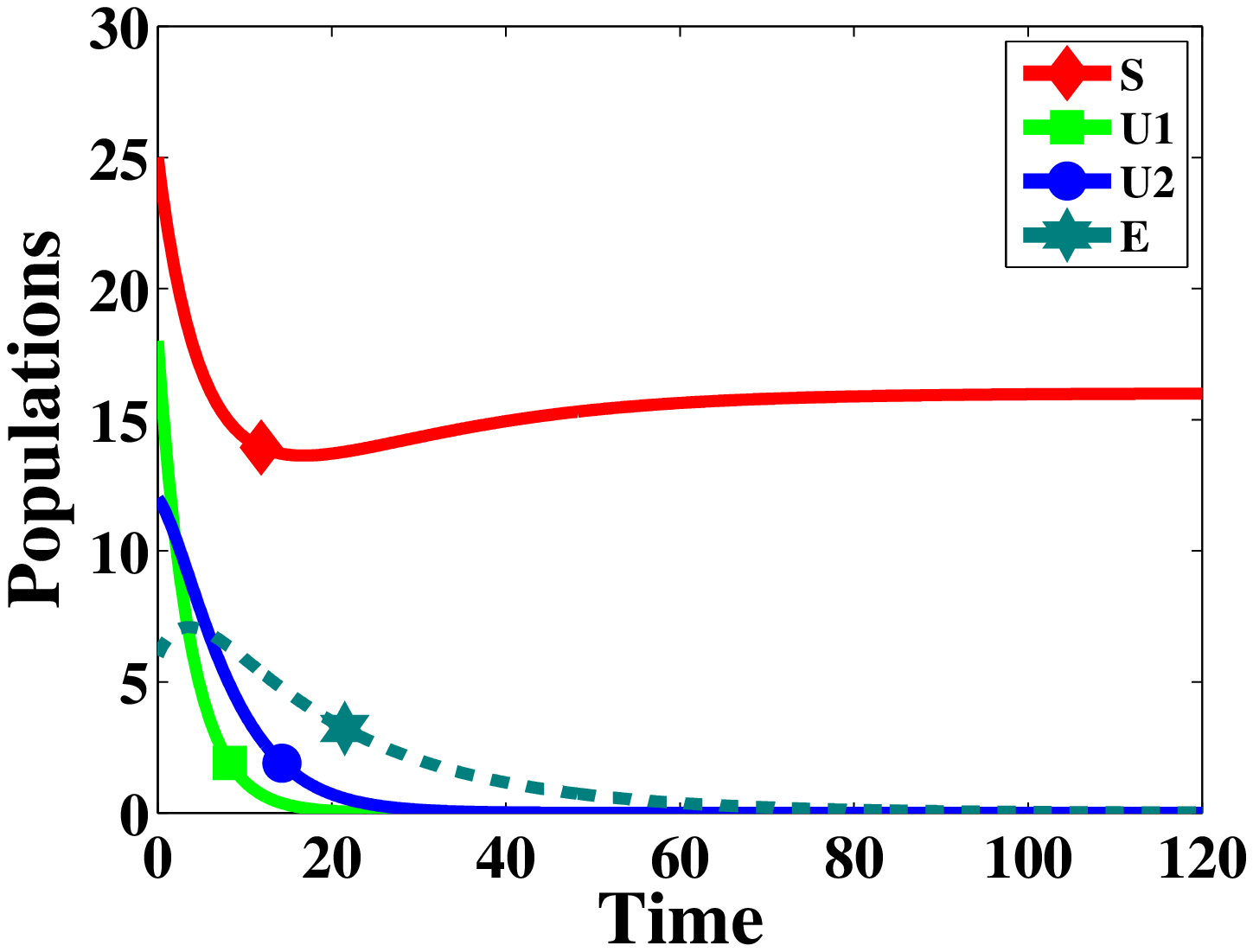}
\qquad
\includegraphics[scale=0.4]{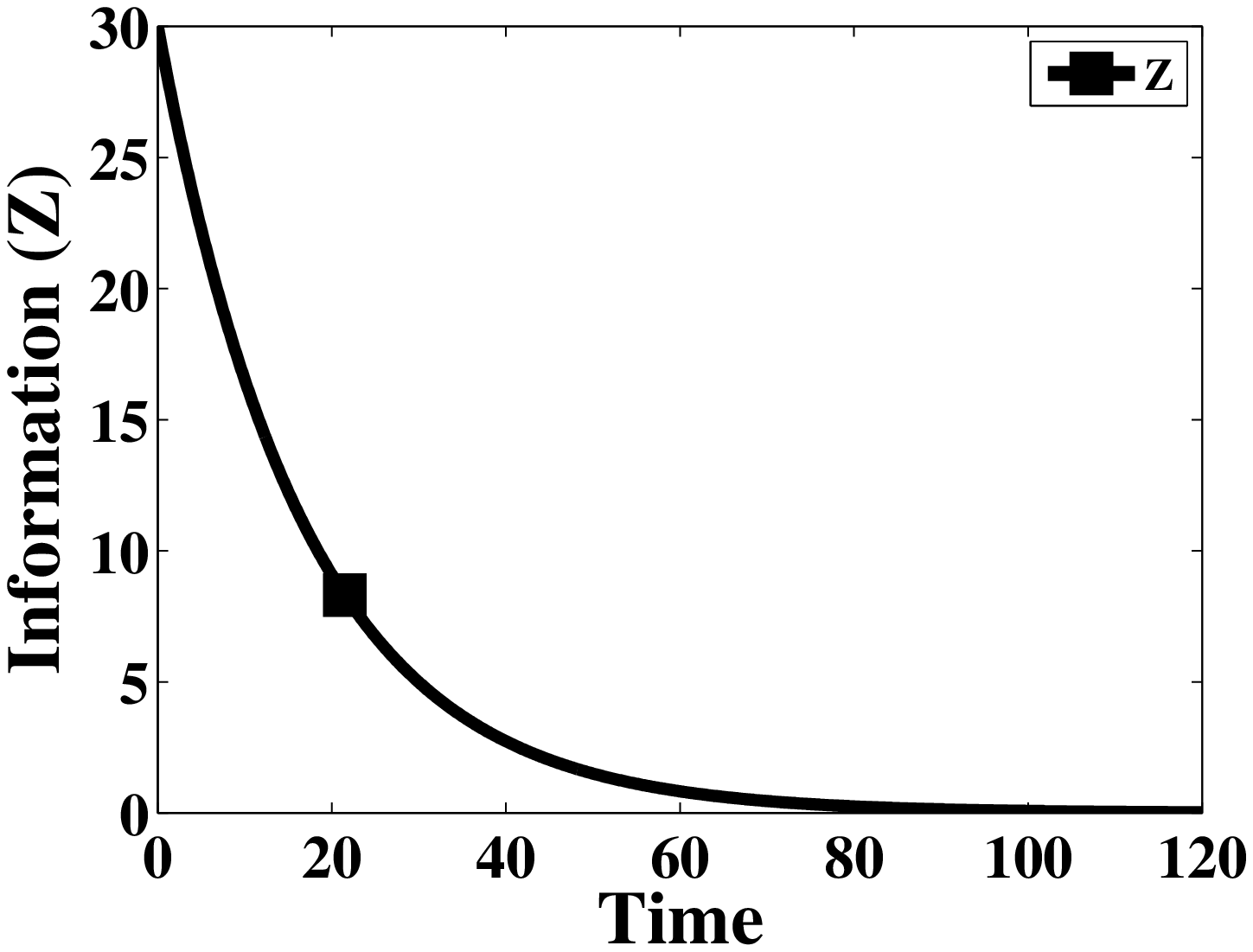}
\caption{Stability of the population around 
the drug-free equilibrium $E_0$
in agreement with Theorem~\ref{thm01}.} 
\label{fig:Stable Equilibrium}
\end{figure}

We are, however, more interested to illustrate numerically
our analytical findings and the involvement of control variables 
in the system dynamics in the endemic situation, when $R_0 > 1$,
which is more challenging and where control measures 
are crucial and optimal control theory has an important role.
We use a fourth-order Runge--Kutta algorithm to perform the numerical 
simulation of the optimal system (\ref{os}) with \eqref{extremal:controls}. 
Choosing the initial conditions 
for the states and the initial guesses for the controls, the state system (\ref{ss}) 
is solved forward in time using a fourth-order Runge--Kutta scheme.
Using the current iteration solution of the state equation (\ref{ss}) 
and the transversality conditions (\ref{tc}), the adjoint system (\ref{as}) 
is solved backwards in time by the fourth-order Runge--Kutta scheme. 
We repeat the iteration process by updating the controls using the state 
and adjoint values. This process will continue until the values of the state, 
adjoint, and controls converge.
The initial values of unknowns we have used are given by 
$S(0)=15.0$, $U_1(0)=5.0$, $U_2(0)=2.0$, $E(0)=1.25$ and $Z(0)=1.0$. 
Population profiles and control interventions are plotted 
for a time period of $t_f=30$ days.  We use the set of parameter values 
as in the Table~\ref{Table} to determine the numerical simulation 
of the optimality system with a small time step size $\Delta t = 0.03$.
\begin{table}[h!]
\centering
\caption{Values of the parameters used to illustrate 
the endemic situation and optimal control.}
\label{Table}
\begin{tabular}{llll}
\hline\noalign{\smallskip}
Parameter & Value & Units & Reference\\
\noalign{\smallskip}\hline\noalign{\smallskip}
$\Lambda$   & 0.7    & persons per day & Assumed\\
$\beta_1$&  0.01  & per day & Assumed  \\
$\beta_2$ & 0.0008 & per day & Assumed \\
$\mu$   & 0.07 & per day & Assumed \\
$\rho$&   0.04  & per day & Assumed \\
$\delta_1$& 0.05  & persons per day & \cite{Wang2011}  \\
$\delta_2$& 0.06  & persons per day & \cite{Wang2011}  \\
$\theta$& 0.001 & per day & \cite{Saha2019} \\
$a$& 0.01  & $-$ & \cite{Saha2019} \\
$b$& 1.0  & $-$ & \cite{Saha2019} \\
$a_0$& 0.06  & $-$ & \cite{Saha2019} \\
$B_1$& 6  & $-$ & Assumed  \\
$B_2$& 120  & $-$ & Assumed  \\
$B_3$& 30  & $-$ & Assumed  \\
$u_{1max}$& 1.0  & $-$ & Assumed  \\
$u_{2max}$& 1.0  & $-$ & Assumed  \\
\noalign{\smallskip}\hline
\end{tabular}
\end{table}  
Note that here the positive weights in the objective functional \eqref{cost}
are assumed. In general, the individual's response to behavioral change 
for a large population is very challenging, and it is expensive. 
Therefore, and exactly because information spreading for the community to change 
their behavior is expensive and a difficult task, we assume
the positive weight for the control $u_1$ to be higher than the control 
$u_2$ \cite{Kassa2015}. We define the following three approaches to examine 
the efficiency of the control policies introduced:
\begin{itemize}
\item {\bf Case 1} -- implementation of the optimal control variable $u_1^*$ only ($u_2 \equiv 0$);
\item {\bf Case 2} -- implementation of the optimal control variable $u_2^*$ only ($u_1 \equiv 0$);
\item {\bf Case 3} -- implementation of both optimal control variables $u_1^*$ and $u_2^*$.
\end{itemize}

Before studying these three situations under optimal control,
we illustrate the endemic situation under investigation
in Fig.~\ref{fig:without controls}, which shows the population densities 
of system (\ref{model}) without controls,
\begin{figure}[hb!]
\centering
\includegraphics[scale=0.4]{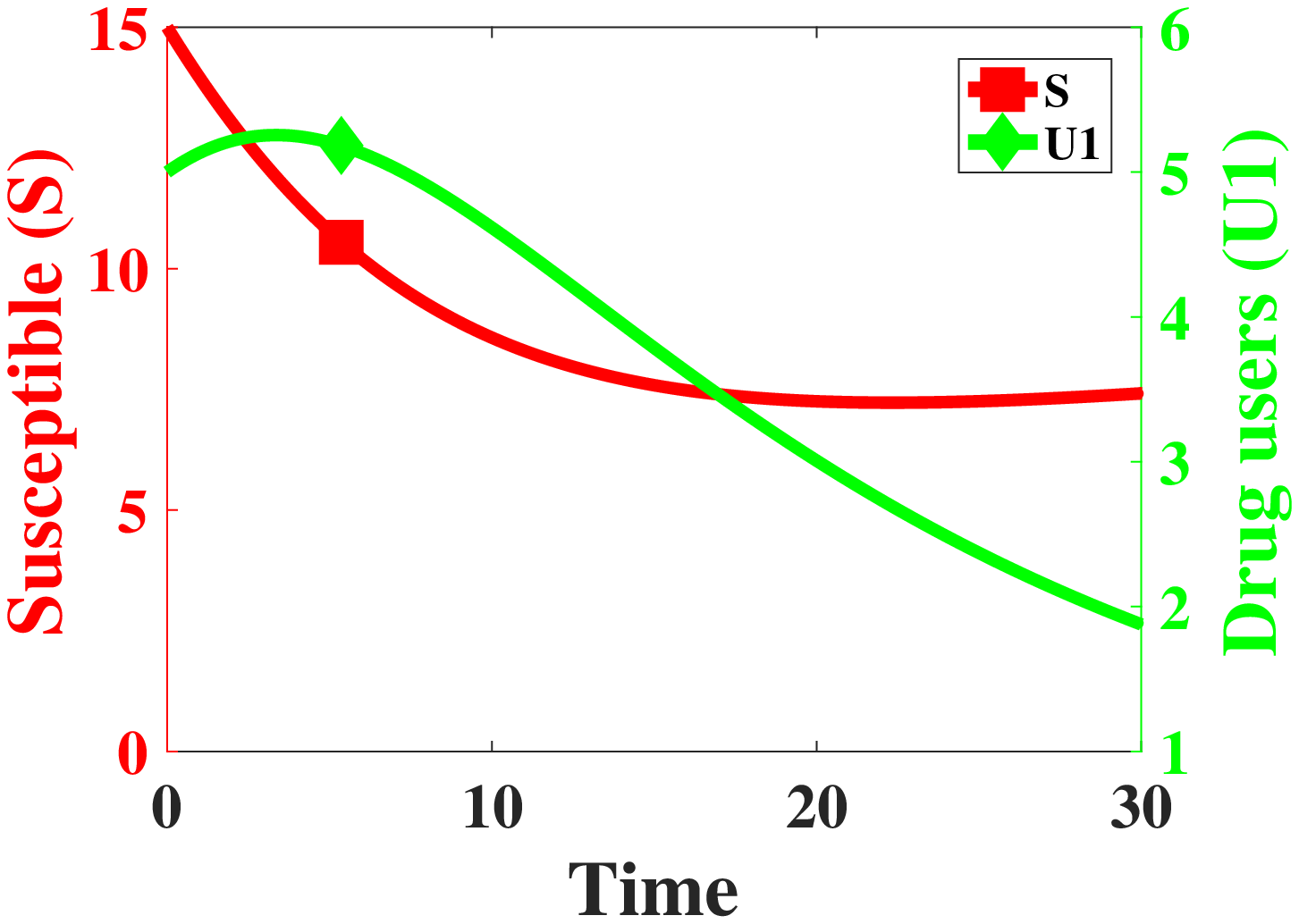}
\qquad
\includegraphics[scale=0.4]{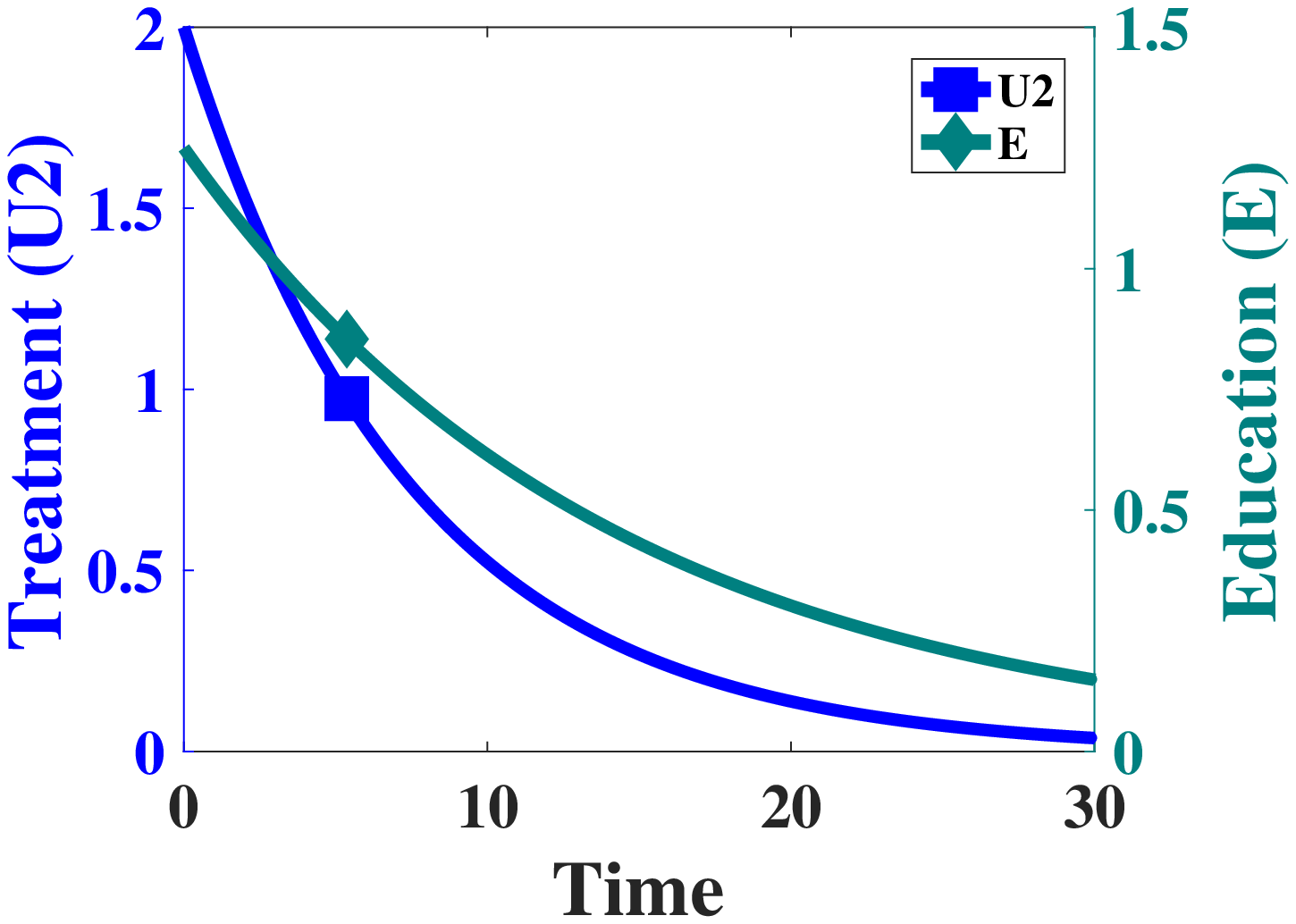}
\caption{Population profiles for the endemic situation 
of Table~\ref{Table} without controls, i.e., $u_1=u_2 \equiv 0$.} 
\label{fig:without controls}
\end{figure}
that is, where we assume $u_1 \equiv 0$ and $u_2 \equiv 0$ 
in the control system \eqref{ss}.
We observe from Fig.~\ref{fig:without controls}, and in contrast
with the situation of Fig.~\ref{fig:Stable Equilibrium}, that the number 
of drug users begins increasing and does not go to zero due to the absence 
of control variables. Therefore, the increase in heroin users creates 
an economic burden on the nation, including drug-affected driving 
and impact on the environment. It also creates opportunity loss, 
and it is the main burden to individual users with weight loss, 
mental disorders, etc. So, we aim to minimize the drug addiction 
burden and also the cost of the control policies. We induce the 
two control interventions, (i)  preventive education as vaccination 
for drug abuse, which spreads through the information and makes 
the behavioral change and (ii) treatment with medications and other therapies.

\bigskip
    
{\bf{Case 1:}} Using the same parameters as in Table~\ref{Table}, 
the above mentioned positive weights and initial conditions, we solve 
the system numerically with control $u_1$ to discuss the 
effectiveness of the control intervention $u_1$. 
The corresponding evolution in the population densities of the system is shown 
in Fig.~\ref{fig:only:u1}. 
\begin{figure}[ht!]
\centering
\begin{subfigure}[t]{0.48\textwidth}
\centering
\includegraphics[scale=0.40]{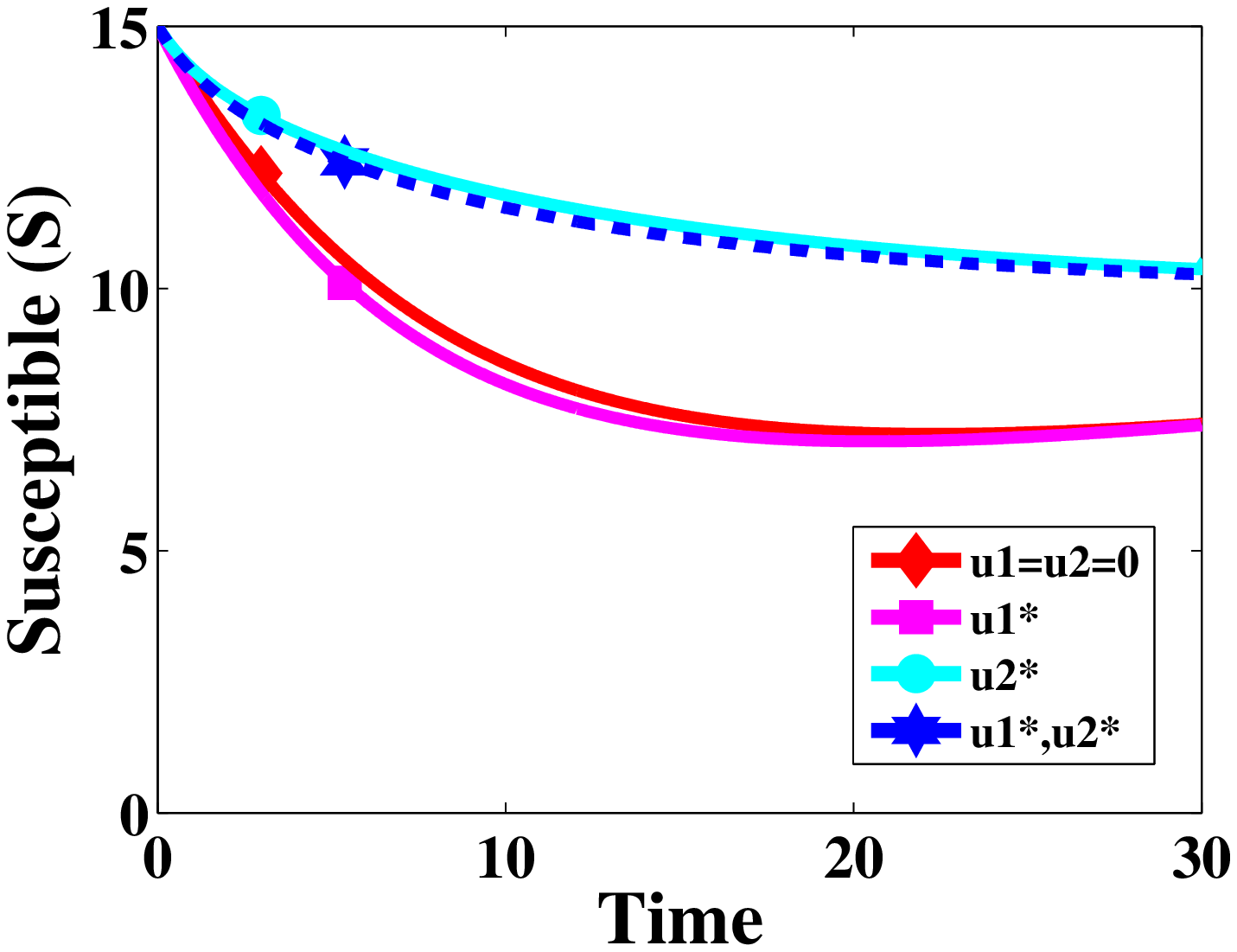}
\caption{}
\label{fig:a}
\end{subfigure}
\begin{subfigure}[t]{0.48\textwidth}
\centering
\includegraphics[scale=0.40]{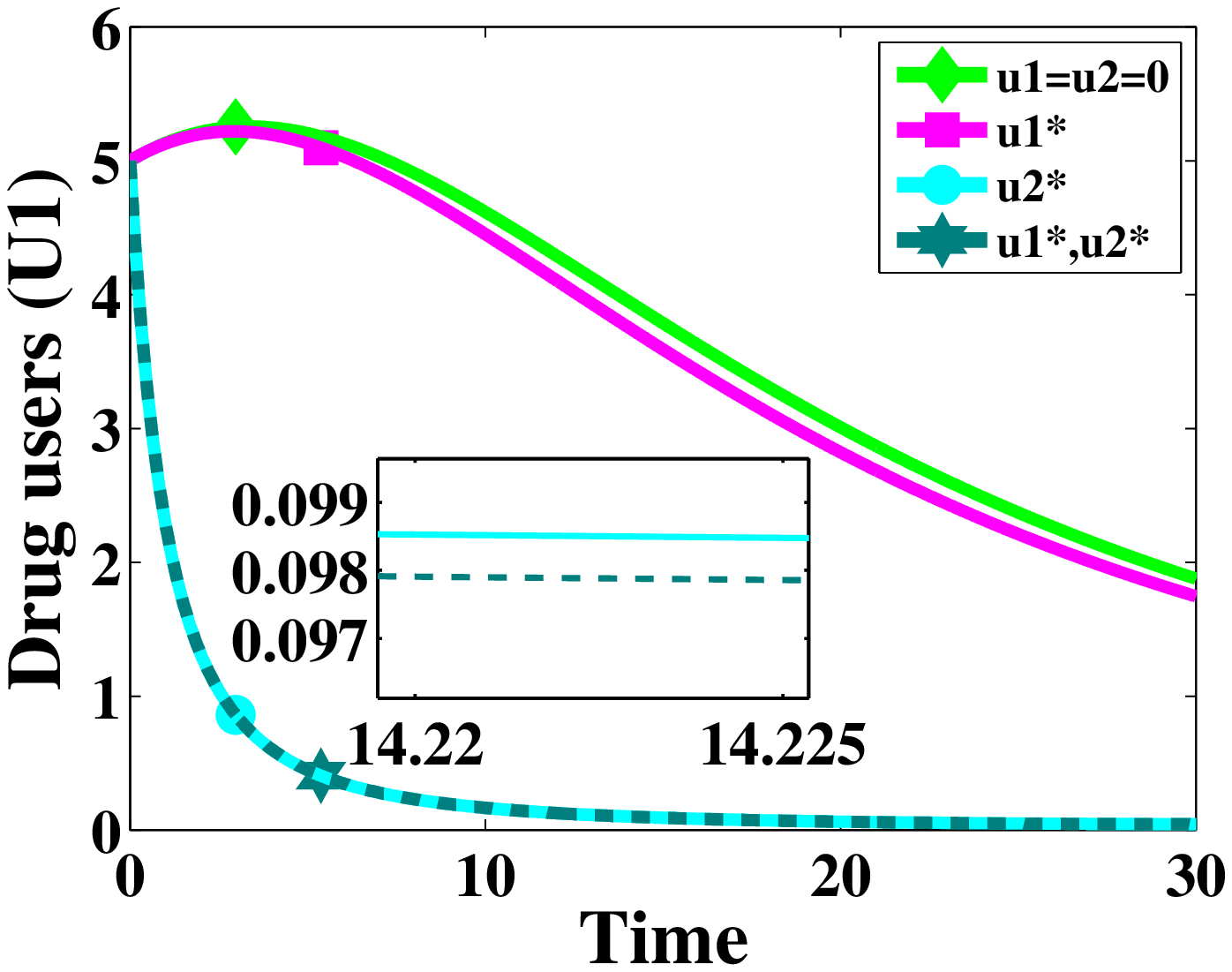}
\caption{}
\label{fig:b}
\end{subfigure}
\begin{subfigure}[t]{0.48\textwidth}
\centering
\includegraphics[scale=0.40]{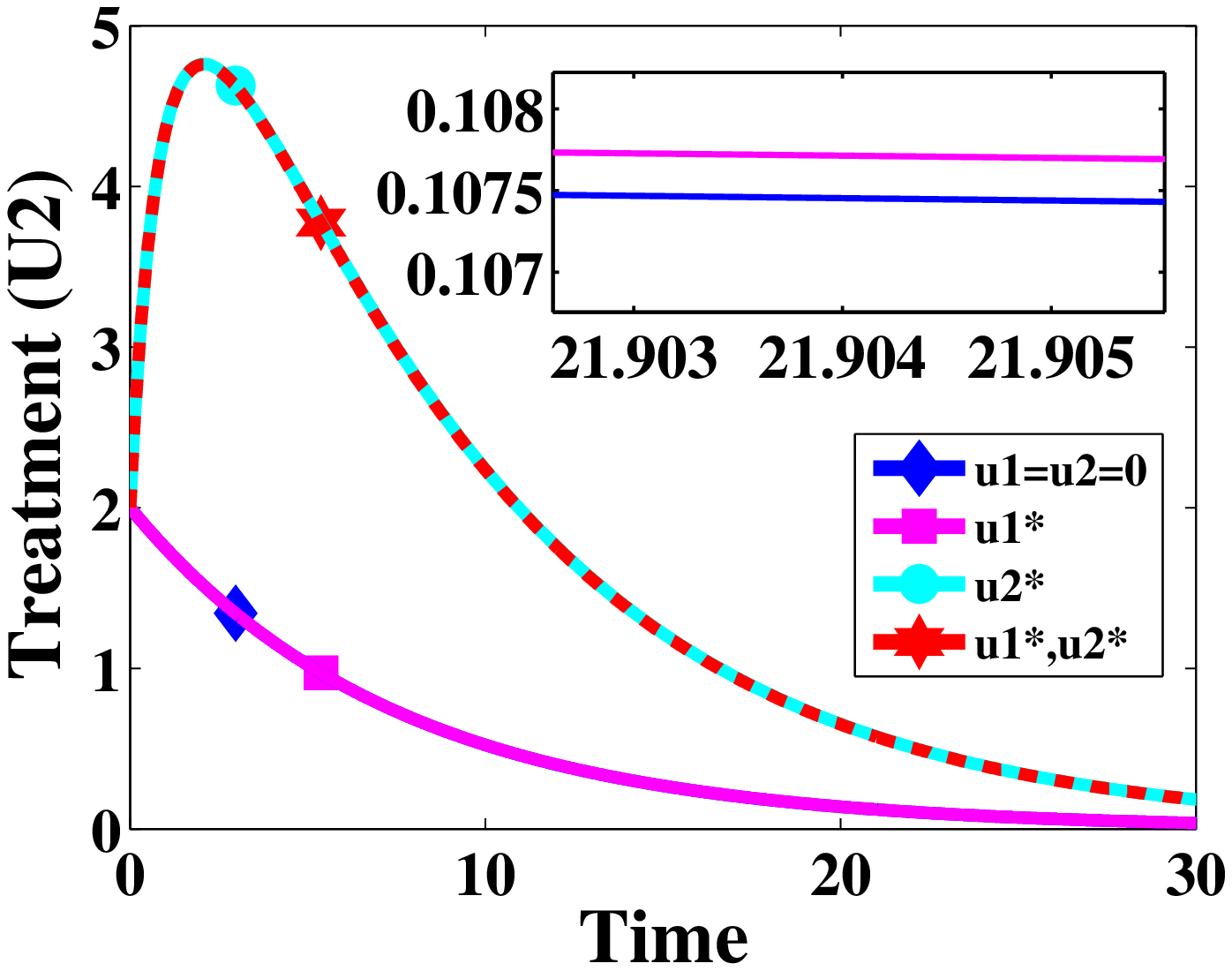}
\caption{}
\label{fig:c}
\end{subfigure}
\begin{subfigure}[t]{0.48\textwidth}
\centering
\includegraphics[scale=0.40]{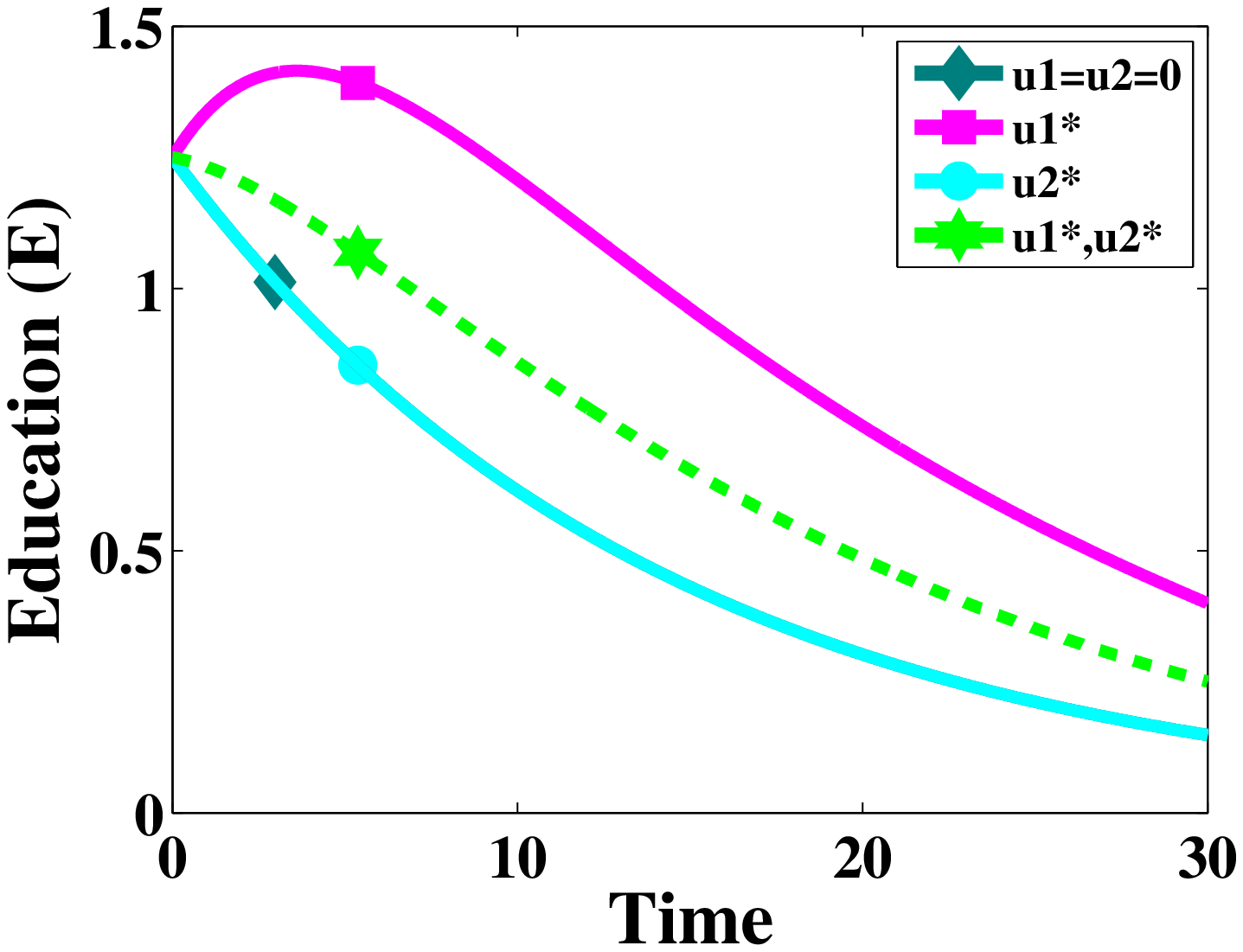}
\caption{}
\label{fig:d}
\end{subfigure}
\caption{Population profiles without controls 
and under optimal control strategies for Cases~1, 2 and 3.} 
\label{fig:only:u1}
\end{figure}
We observe from Fig.~\ref{fig:d}, that the population 
of preventive education is gradually increasing with the control 
$u_1$ than without control. Moreover, we also observe from 
Fig.~\ref{fig:a} and \ref{fig:b} a moderate decrease in the population 
of susceptible and drug users. Further, various computational results 
were carried out with different values of $\rho$, which are depicted 
in Fig.~\ref{fig:5a}. 
\begin{figure}[ht!]
\centering
\begin{subfigure}[t]{0.48\textwidth}
\centering
\includegraphics[scale=0.40]{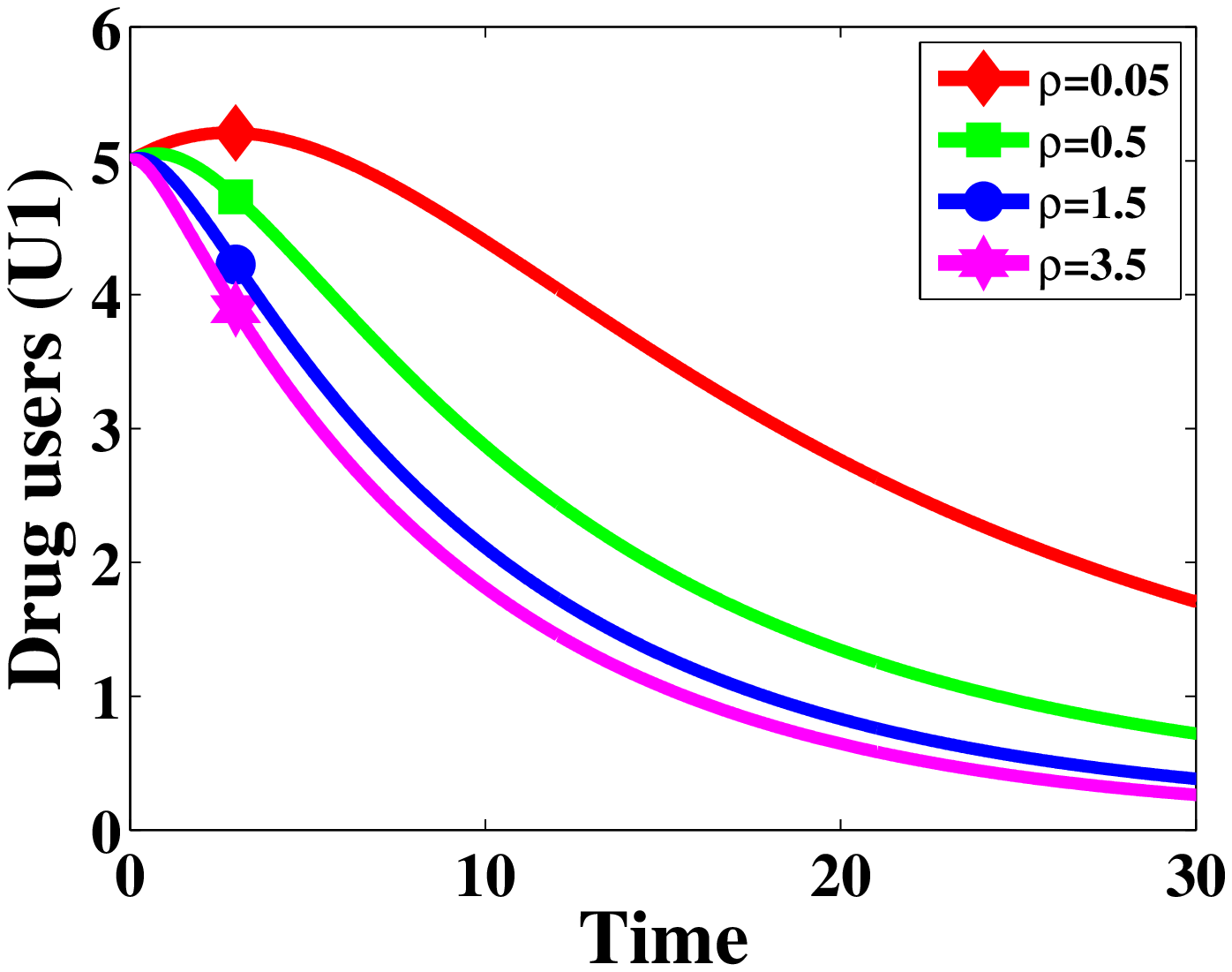}
\caption{Drug users}
\label{fig:5a}
\end{subfigure}
\begin{subfigure}[t]{0.48\textwidth}
\centering
\includegraphics[scale=0.40]{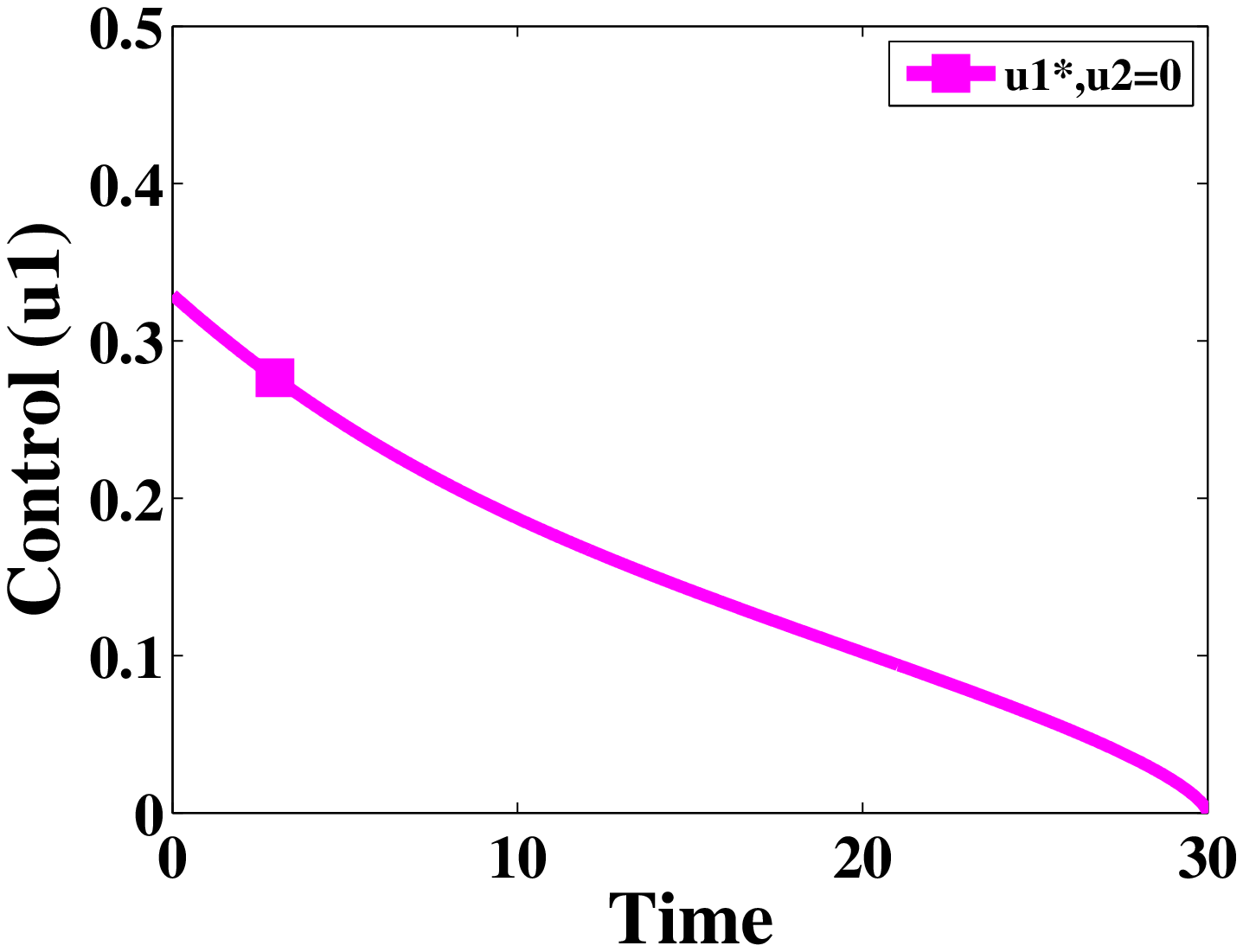}
\caption{Optimal intensity $u_1^*$}
\label{fig:5b}
\end{subfigure}
\caption{Drug users and optimal intensity of $u_1$ in Case~1
for various values of the rate $\rho$ of information interaction.} 
\label{fig:d-different}
\end{figure}
It clearly shows that an increase in the information interaction 
rate $\rho$ decreases the drug user population. Moreover, 
the control profile is plotted in Fig.~\ref{fig:5b}.

{\bf{Case 2:}} We are continuing the numerical simulations 
using the same parameter values as in Case~1 but with control $u_2$. 
Then the evolution of the corresponding population densities 
of the optimal control system is depicted in  Fig.~\ref{fig:only:u1}. 
Here we can understand that the number of drug users in treatment 
is rapidly increasing over the course of time when compared 
with Case~1, see Fig.~\ref{fig:c}. 
Further, the influence of control $u_2$ 
is also there in other population densities of the model, 
see Figs.~\ref{fig:a} and \ref{fig:b}. 
The corresponding optimal control profile is given 
in Fig.~\ref{fig:6a}. It shows that treatment for drug users 
rapidly decreases in the stipulated time and then goes to zero. 
We conclude that medical treatment plays a crucial role 
to reduce the population of heroin users. 
\begin{figure}[h!]
\centering
\begin{subfigure}[t]{0.48\textwidth}
\centering
\includegraphics[scale=0.40]{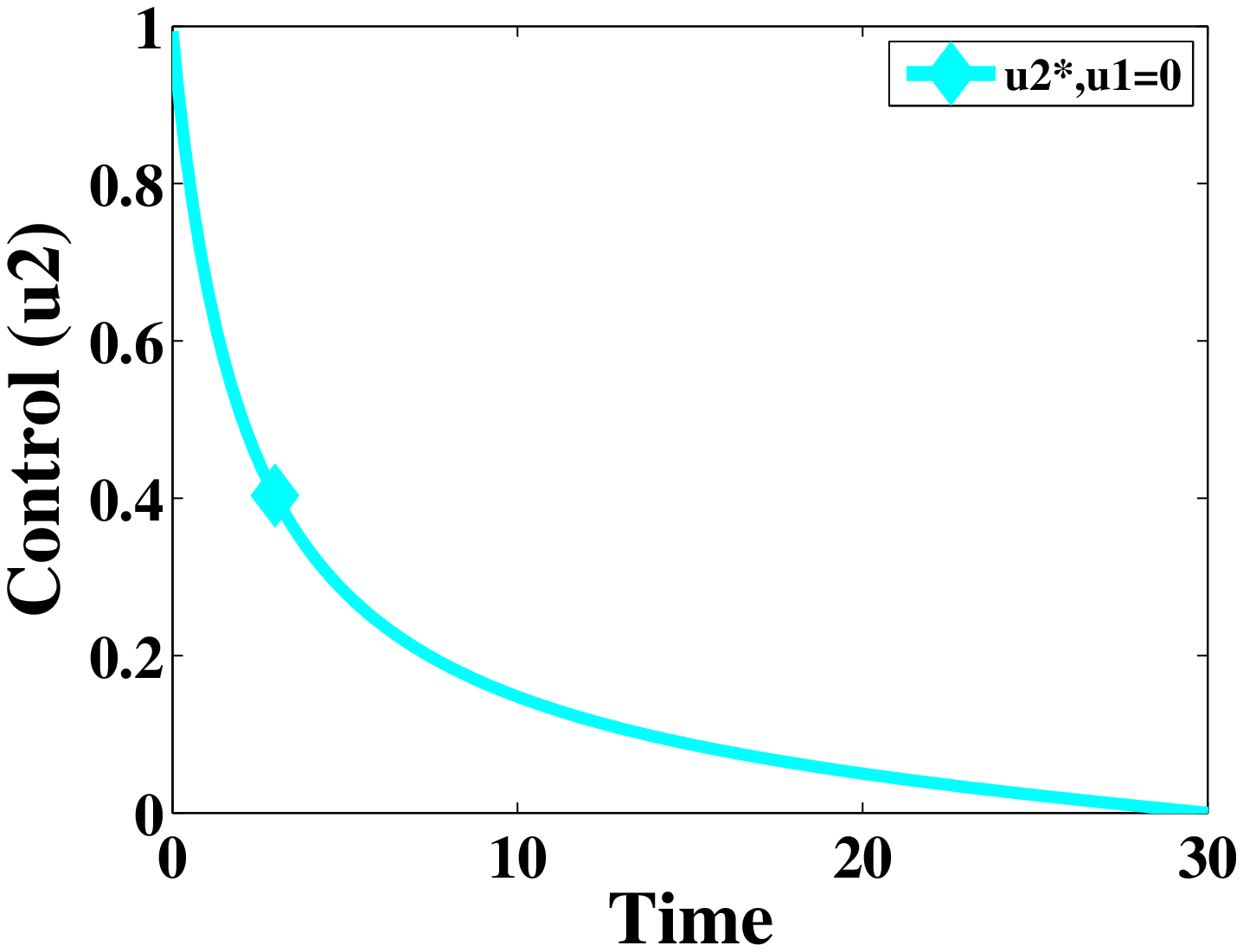}
\caption{Optimal intensity of $u_2^*$ (Case~2)}
\label{fig:6a}
\end{subfigure}
\begin{subfigure}[t]{0.48\textwidth}
\centering
\includegraphics[scale=0.40]{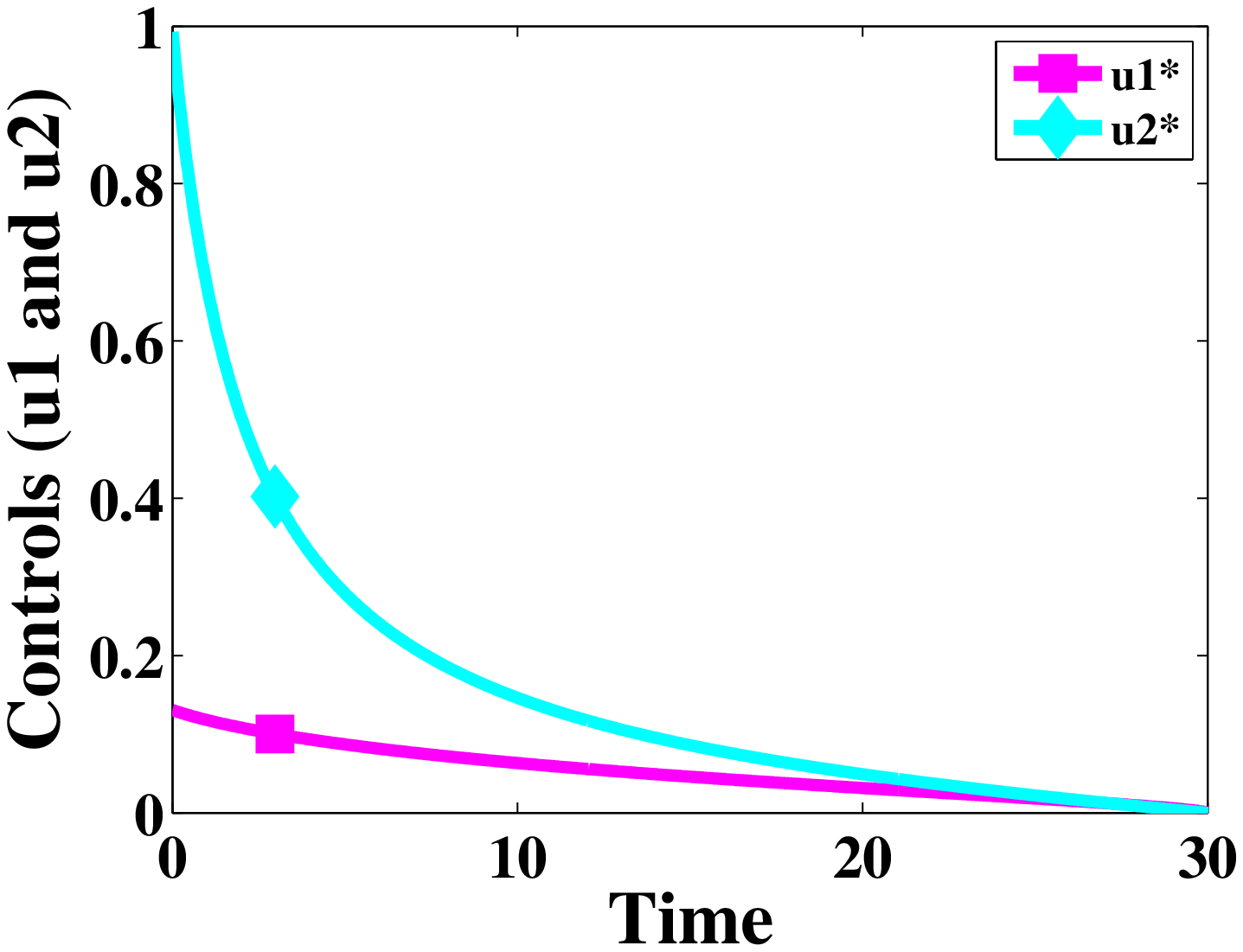}
\caption{Optimal controls $u_1^*$ and $u_2^*$ (Case~3)}
\label{fig:6b}
\end{subfigure}
\caption{Optimal control profiles.} 
\label{fig:bothcontrol}
\end{figure}
        
{\bf{Case 3:}}
In this case, we take non-zero control interventions, that is, 
we compute the solution of the optimal control problem exactly
as discussed in Section~\ref{s4}. Further, we continue the simulations 
with the same parameter values as in the previous two cases. 
The evolution of the population densities is depicted in 
Fig.~\ref{fig:only:u1}. As expected, the influence of both controls 
is more effective than the other two cases already discussed. 
The optimal control profile is depicted in Fig.~\ref{fig:6b}. 
Fig.~\ref{fig:Jbothcontrol} shows the information level 
with optimal policies versus without any control measures. 
\begin{figure}[h!]
\centering
\includegraphics[scale=0.50]{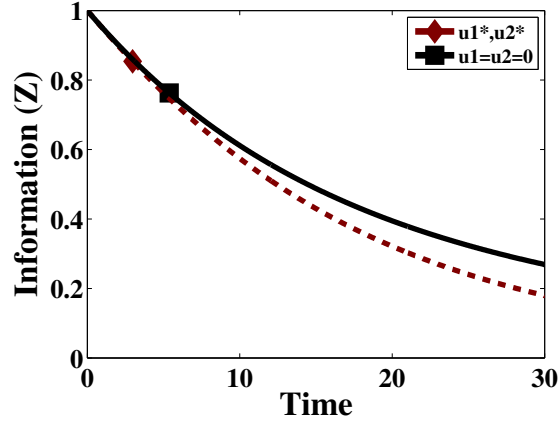}
\caption{Information level with $u_1^*,u_2^*$ \eqref{oc} versus $u_1=u_2=0$.} 
\label{fig:Jbothcontrol}
\end{figure}
\begin{figure}[h!]
\centering	
\begin{subfigure}[t]{0.48\textwidth}
\centering
\includegraphics[scale=0.40]{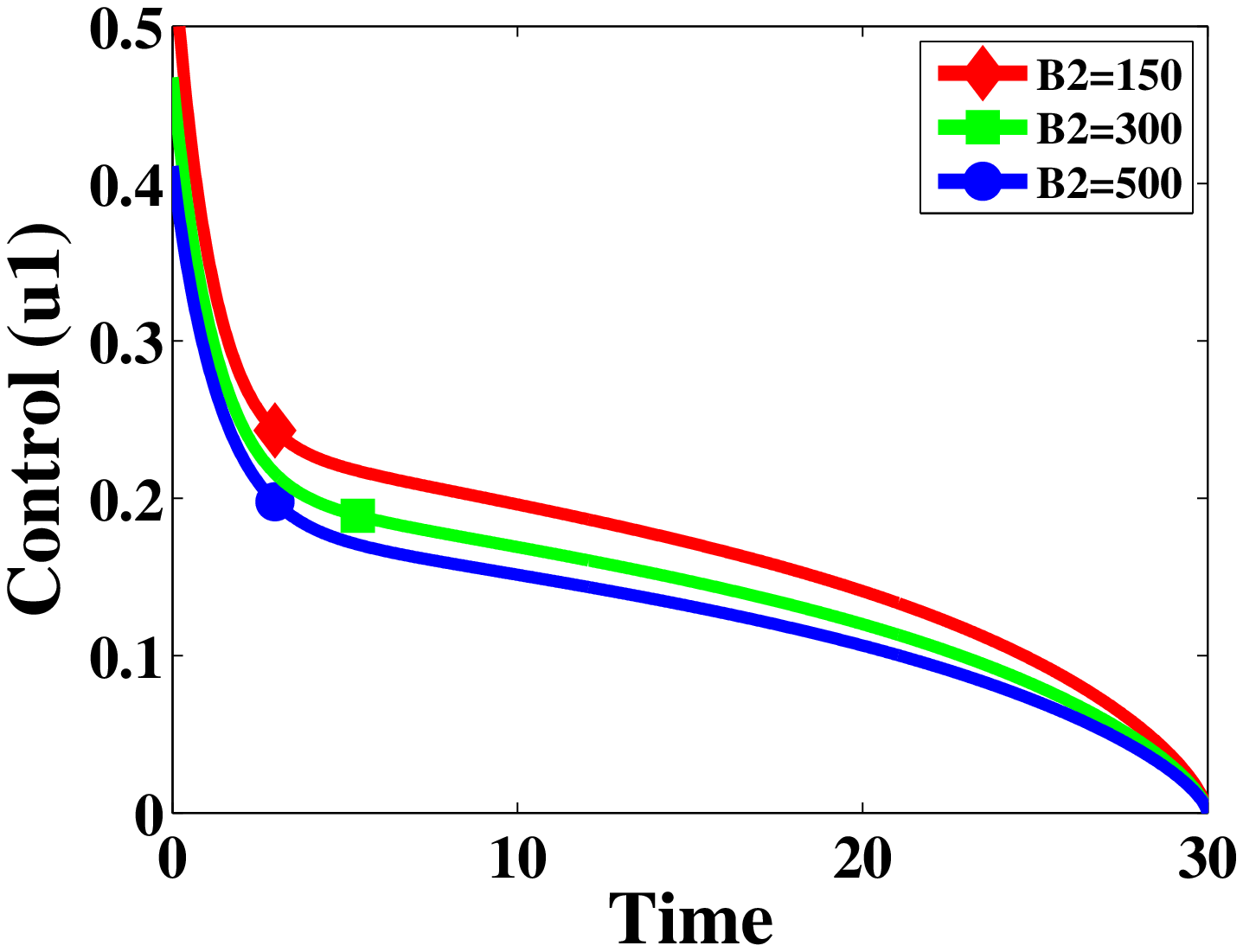}
\caption{Changing $B_2$ ($B_1 = 6$, $B_3 = 30$)}
\label{fig:8a}
\end{subfigure}
\begin{subfigure}[t]{0.48\textwidth}
\centering
\includegraphics[scale=0.40]{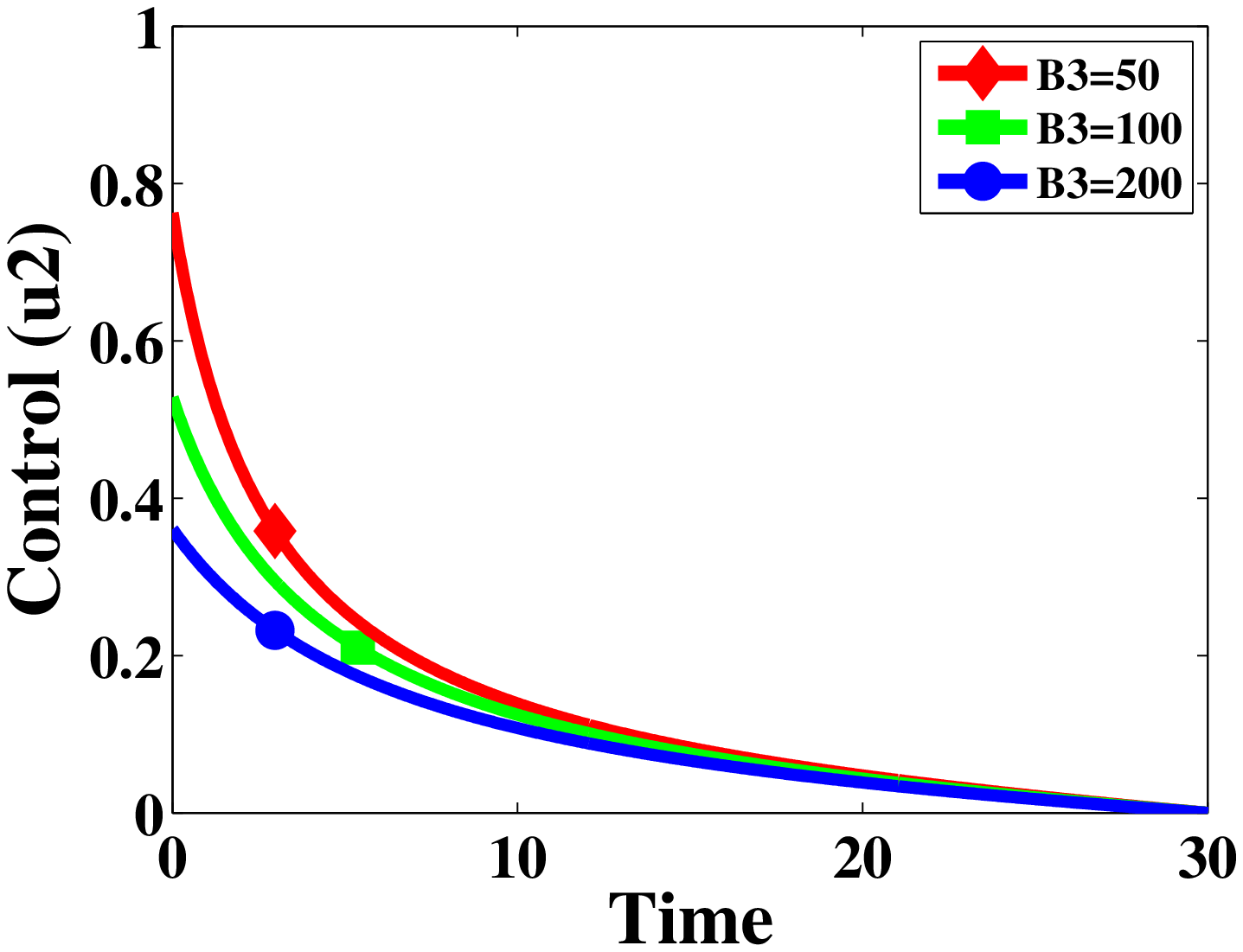}
\caption{Changing $B_3$ ($B_1 = 6$, $B_2 = 120$)}
\label{fig:8b}
\end{subfigure}
\caption{Optimal controls $u_1^*,u_2^*$ \eqref{oc} 
with different weights in cost functional \eqref{cost}.} 
\label{fig:controlsdifferent}
\end{figure}
Furthermore, we also perform numerical simulations 
with different weights for both control interventions $u_1$ and $u_2$. 
The result given in Fig.~\ref{fig:8a} shows how the control strategies 
depend on weight $B_2$. It is noted that if the positive weight $B_2$ increases, 
then the amount of control policy $u_1$ decreases. Figure~\ref{fig:8b} 
illustrates how the control strategies depend on weight $B_3$. The amount 
of treatment $u_2$ decreases as the positive weight $B_3$ increases.


\section{Conclusion}
\label{sec:conc}

We examined an optimal control problem for a heroin epidemic model. 
Information regarding prevention education and drug treatments were 
considered as control interventions. Both controls have their advantage 
and efficiency in implementation. Stability theory was used to analyze 
the mathematical model qualitatively. The system has two equilibrium points: 
a drug-free equilibrium, which always exists, and an endemic equilibrium, 
which exists when the basic reproduction number is greater than one. 
We analytically found controls 
in terms of state and costate variables and then numerically 
solved the boundary value problem for the 
resulting system of ordinary differential equations, 
finding the optimal paths. Further, various control strategies 
were studied numerically for the proposed control problem. Finally, 
we concluded that prevention programs and treatment not only decrease 
the cost burden but also minimize the number of drug abuse cases.  
As a future direction of research, one can investigate the proposed
heroin model by introducing stochastic effects on the unknowns \cite{MR4173153}.
Further, application of several types of delays \cite{MR4236351}
and multiobjective optimization \cite{MR3804169} are also pointed 
out as interesting directions for future research. 


\section*{Acknowledgments}

The authors are grateful to two anonymous reviewers
for several constructive comments that really helped 
to improve the manuscript. 


\section*{Funding}

Sowndarrajan is thankful to the Ministry of Human Resources Development (MHRD) 
and National Institute of Technology Goa, India, for awarding him a Senior Research Fellowship. 
Debbouche and Torres are grateful to the Portuguese Foundation for Science and Technology (FCT),  
project UIDB/04106/2020 (CIDMA).



\end{document}